\numberwithin{equation}{section}
\definecolor{darkred}{rgb}{0.8,0,0}
\DeclareMathOperator{\Span}{\operatorname{span}}
\DeclareMathOperator{\diag}{\operatorname{diag}}
\theoremstyle{definition}
\newtheorem{definition}{Definition}[section]
\newtheorem{remark}[definition]{Remark}
\newtheorem{theorem}[definition]{Theorem}
\newtheorem{lemma}[definition]{Lemma}
\newtheorem{corollary}[definition]{Corollary}
\title{A theoretical analysis of mass scaling techniques}
\author[1]{Yannis Voet \thanks{yannis.voet@epfl.ch}}
\author[1]{Espen Sande \thanks{espen.sande@epfl.ch}}
\author[1]{Annalisa Buffa \thanks{annalisa.buffa@epfl.ch}}
\affil[1]{\small MNS, Institute of Mathematics, École polytechnique fédérale de Lausanne, Station 8, CH-1015 Lausanne, Switzerland}
\date{\today}
\begin{document}

\maketitle

\begin{abstract}
Mass scaling is widely used in finite element models of structural dynamics for increasing the critical time step of explicit time integration methods. While the field has been flourishing over the years, it still lacks a strong theoretical basis and mostly relies on numerical experiments as the only means of assessment. This contribution thoroughly reviews existing methods and connects them to established linear algebra results to derive rigorous eigenvalue bounds and condition number estimates. Our results cover some of the most successful mass scaling techniques, unraveling for the first time well-known numerical observations.

\noindent \textbf{Keywords}:
Mass scaling, Mass lumping, Explicit dynamics, Critical time step, Outlier removal
\end{abstract}

\section{Introduction and background}
Finite element analysis (FEA) has long established itself as an indispensable tool in many engineering disciples. Particularly in structural dynamics, one of its founding applications, where the method is extensively used for simulating the deformation and vibration of various structures and solids, including plates, shells and beams. In this context, explicit time integration is favored in several circumstances. Indeed, the dynamics of the problem often impose a natural restriction on the step size \cite{olovsson2005selective}, and enlarging it may lead to convergence issues within implicit unconditionally stable methods, specifically for nonlinear problems requiring the solution of a nonlinear system of equations at each iteration. On the contrary, the smaller step size used within explicit methods ensures greater robustness. Moreover, explicit methods offer the possibility of ``lumping'' the mass matrix enabling memory-efficient matrix-free implementations. \emph{Mass lumping} consists in directly substituting the mass matrix with an ad hoc diagonal approximation \citep{anitescu2019isogeometric,nguyen2023towards,voet2023mathematical,li2022significance}. Thus, explicit methods may avoid solving costly linear or nonlinear systems of equations, left alone the burden of linearizing the stiffness.

Unfortunately though, explicit methods are only conditionality stable and the higher frequencies impose a strong restriction on the critical time step. For example, for undamped dynamical systems, the critical time step of the central difference method is
\begin{equation}
\label{eq: CFL_central_difference}
    \Delta t_c= \frac{2}{\omega_n}
\end{equation}
where $\omega_n$ is the largest frequency of the discrete system \citep{hughes2012finite, bathe2006finite}. The largest discrete frequencies of classical $C^0$ finite element methods are wholly inaccurate and fortunately rarely contribute significantly to the solution. They form the so-called ``optical branches'' of the spectrum and diverge with the mesh size and polynomial degree \citep{gallistl2017stability,gahalaut2012condition}. Smooth isogeometric analysis (IGA) \cite{hughes2005isogeometric} features far fewer inaccurate frequencies \citep{cottrell2006isogeometric,cottrell2007studies,hughes2014finite,manni2022application}. They usually form noticeable spikes in the upper part of the spectrum and were coined \emph{outliers} for this very reason \cite{cottrell2006isogeometric}. Regardless of the discretization technique and smoothness, large inaccurate eigenvalues severely restrict the critical time step and removing or dampening them is paramount in explicit dynamics. The method for removing them, however, usually depends on the discretization technique. In isogeometric analysis, so-called \emph{outlier removal techniques} commonly exploit the smoothness and tensor product nature of spline spaces (see e.g. \citep{cottrell2006isogeometric,manni2022application,manni2023outlier,hiemstra2021removal,deng2021boundary,nguyen2022variational,voet2024robust}). These methods may also preserve the consistency of the discrete formulation and therefore higher rates of convergence. In more general cases with significantly less structure, one may resort to \emph{mass scaling}. Broadly speaking, mass scaling consists in dampening higher frequencies by altering the discrete formulation and, more specifically, the mass matrix. \emph{Conventional mass scaling} (CMS) \cite{key1971transient,hughes1978reduced}, which only scales the diagonal entries of the mass matrix, is the most straightforward way of achieving this goal. Unfortunately, it also heavily deteriorates the accuracy of the lower frequencies and mode shapes, and, subsequently, the numerical solution. \emph{Selective mass scaling} (SMS) is specifically designed for scaling down the highest frequencies while somewhat preserving the lowest ones. Thus, it still affects \emph{all} eigenvalues, but not uniformly. Mathematically speaking, the idea is reminiscent of inexact deflation techniques, which consist in removing unwanted eigenvalues from the spectrum of a matrix. Exact deflation techniques have been known for decades and originated from the early work of Hotelling \cite{hotelling1943some} for standard eigenproblems but are mostly impractical. Practical strategies, either global or local, are instead essentially heuristic and oftentimes little is known of their theoretical properties. Global strategies directly modify the assembled system matrices while local ones operate on the element matrices prior to assembly. The former are generally easier to analyze than the latter and are among the oldest methods. In the late 1990s, Macek and Aubert \cite{macek1995mass} and later Olovsson et al. \cite{olovsson2005selective} independently proposed the same global mass scaling procedure, based on linear fractional transformations of matrix pairs. The method was later generalized in \cite{tkachuk2014local} to rational polynomials, where the authors also stated optimality requirements for SMS. These methods are fully understood and transform (nonlinearly) the eigenvalues of a matrix pair while preserving its eigenvectors. Although the methods affect all eigenvalues, they may heavily damp the largest ones, while nearly preserving the smallest ones. In the same article, the authors also proposed an exact deflation method only scaling down the largest eigenvalues of the system. The idea was later revived and thoroughly improved in \cite{gonzalez2020large} but remains impractical, unless only a few well-separated eigenvalues must be deflated \cite{voet2024robust}.  As we will see, all of these methods are merely adaptions or reformulations of well-known linear algebra facts, mostly known since decades.

Since the early 2000s, focus has slowly shifted towards inexact global or local strategies. Global strategies often have a local counterpart, which simply consists in performing the same operations locally prior to assembly. Local strategies naturally suggest themselves when local properties dictate global ones. For instance, for hexahedral elements, the largest eigenvalues are tied to large aspect ratios \citep{olovsson2004selective,cocchetti2013selective,cocchetti2015selective} while for beam and plate models, they are associated to transverse shear modes \cite{oesterle2022intrinsically,oesterle2023finite,krauss2024intrinsically}. Already in the 1970s, several authors suggested locally scaling rotational components of lumped mass matrices within beam models \cite{key1971transient,hughes1978reduced}. This technique, nowadays known as \emph{rotational mass scaling} (RMS), merely applies conventional mass scaling to selected degrees of freedom and is rather inaccurate \cite{oesterle2023finite,krauss2024intrinsically}. As shown by Oesterle et al. \cite{oesterle2022intrinsically}, its accuracy is sometimes significantly improved by simply reparameterizing the model and choosing different primary unknowns. This concept of \emph{intrinsically selective mass scaling} (ISMS) was recently extended to plate formulations \cite{krauss2024intrinsically}.

For hexahedral elements, Olovsson et al. \cite{olovsson2004selective} first suggested grouping nodes aligned in the thickness direction of thin-walled structures and locally scaling the inverse mass matrices of each individual group. A rather similar strategy was followed by Cocchetti et al. for non-distorted \cite{cocchetti2013selective} and distorted \cite{cocchetti2015selective} hexahedral elements. Also in the early 2000s, Olovsson et al. \cite{olovsson2005selective} suggested instead locally scaling the element mass matrices of hexahedral finite elements and highlighted some appealing numerical properties. Nowadays, the method, together with a miscellaneous collection of variants and generalizations \citep{gavoille2013enrichissement,borrvall2011selective}, is strongly rooted in the engineering community and has been incorporated in commercial finite element software for car-crash simulations, such as LS-DYNA \cite{borrvall2011selective} and RADIOSS \cite{morancay2009dynamic}. Several improvements have also lately been suggested in \cite{hoffmann2023finite}. As we will see, some of these methods are (inexact) local deflation techniques. Exact local versions were investigated in \citep{tkachuk2014local,ye2017selective,gonzalez2018inverse,gonzalez2020large,voet2024robust}, sometimes by simply adapting their global counterpart. These methods are similar to eigenvalues stabilization techniques \cite{eisentrager2024eigenvalue}, which are one of numerous ad hoc stabilization schemes for immersed methods \citep{leidinger2020explicit,stoter2023critical}.

A completely different approach was followed in \cite{tkachuk2013variational} where the authors derived a parametrized family of mass matrices from a penalized Hamilton's principle. Interestingly, the method of Olovsson et al. may be recovered (up to a multiplicative factor) for specific parameter values and Ansatz spaces. However, the authors propose scaling the consistent mass matrix and the subsequent improvement of the critical time step may not even reach beyond the one for a lumped mass matrix. Additionally, the authors report severe conditioning issues, sometimes even preventing the convergence of iterative solvers. To our knowledge, this method has not been widely adopted in practice. In \cite{tkachuk2015direct,schaeuble2017variationally}, the authors used a similar strategy for building directly a sparse approximate inverse of the consistent mass (therein referred to as \emph{reciprocal mass matrix} (RMM)) and a variationally scaled version thereof. While this strategy alleviates the burden of solving linear systems with the scaled mass matrix, it also does not always increase the critical time step with respect to the lumped mass matrix. Moreover, the definiteness of the scaled approximate inverse practically limits the range of parameter values and prevents increasing the step size beyond 50\%. Finally, technical issues tied to dual polynomials further impede the method, such as the imposition of Dirichlet boundary conditions. A simpler alternative based on Lagrange multipliers was proposed for FEA \cite{gonzalez2018inverse} and later extended to IGA \cite{gonzalez2019inverse}.

Performing operations locally is often significantly faster than globally and offers potential for parallelization. However, while some global methods may be classified as ``exact'', most local methods rarely are due to the assembly process. Quantifying and analyzing their ``inexactness'' is one of the objectives of this article. Before that, the heterogeneous collection of methods listed above asks for a thorough review. Among the aforementioned mass scaling techniques, we have identified three common shortcomings: 
\begin{enumerate}
    \item Most practically relevant methods are inexact and will surely affect the smallest eigenfrequencies. The accuracy of the methods is often only verified by comparing them to the solution with a lumped mass matrix, which might not always yield an accurate approximation. Additionally, mass scaling may reduce the convergence rate of the smallest eigenvalues, if not already ruined by mass lumping. To our knowledge, few authors have undertaken a convergence study for their method.
    \item In addition to sometimes (critically) affecting the accuracy, mass scaling often leads to a non-diagonal scaled mass matrix, thereby partly undermining the computational savings from the reduction in the number of iterations. Surprisingly few authors have investigated this tradeoff and the time savings (if any) might come at the price of increased storage requirements. Some of the methods proposed are impractical and may not compete with a lumped mass, left alone accuracy concerns. Due to the non-diagonal scaled mass, researchers have resorted to preconditioned iterative methods \citep{olovsson2006iterative,tkachuk2014local} or Cholesky factorizations \citep{stoter2022variationally,stoter2023critical} for solving linear systems. The former has also driven condition number analyses for the scaled mass. 
    \item Mass scaling methods are first and foremost designed to increase the critical time step but theoretical bounds on the step size are rarely provided. Some heuristic and ``analytical'' step size estimates were derived for FEA \citep{cocchetti2013selective,cocchetti2015selective} and IGA \citep{hartmann2015mass,adam2015stable}, often by means of characteristic lengths. However, they generally do not guarantee a stable step size \cite{flanagan1984eigenvalues}, unless derived from rigorous bounds based e.g. on the Gershgorin circles \citep{flanagan1984eigenvalues,varga2011gersgorin,schaeuble2018time} or on Ostrowski’s bound \cite{tkachuk2019time}.
\end{enumerate}

In view of the points raised above, our article complements previous numerical studies with a theoretical analysis, which we feel is needed. Our analysis offers a unifying picture and a clearer positioning of the main methods proposed in the literature and is substantiated with some eigenvalue, step size and condition number estimates. Our study focuses exclusively on theoretical aspects and sheds light on the numerical observations reported in the literature. The few numerical experiments of this article are only meant to validate the theoretical predictions. Although mostly developed and applied for $C^0$ discretizations, some of the methods discussed herein are also valuable for smooth spline discretizations (i.e. IGA) and fit in the framework of outlier removal techniques provided one substitutes elements for patches.

The outline of the article is as follows: in \Cref{se: mass_scaling}, we first formally introduce mass scaling as an eigenvalue perturbation problem and also recall the terminology adopted within the community. Our analysis then focuses on global (\Cref{se: global_mass_scaling}) and local (\Cref{se: local_mass_scaling}) techniques. In both sections, we first recall some general results in eigenvalue perturbation theory before establishing rigorous bounds on the eigenvalues, step size, and condition number. Our bounds prove some of the observations stated by the original authors and support their numerical findings. Finally, \Cref{se: numerical_experiments} presents a few numerical experiments highlighting the sharpness of our bounds and provides further insights on the behavior of the methods. Lastly, \Cref{se: conclusion} summarizes our findings and suggests improvements for future work.

\section{Mass scaling}
\label{se: mass_scaling}
Mass scaling consists in perturbing the mass matrix in order to drive down the largest generalized eigenvalues constraining the critical time step of explicit time integration schemes, while ideally leaving the smallest ones unaffected. In general, the scaled mass matrix $\overline{M}$ is obtained by adding to the (lumped) mass matrix $M$ a well chosen symmetric positive semidefinite perturbation $E$:
\begin{equation*}
    \overline{M}=M+E.
\end{equation*}
Scaled quantities are often denoted with an overline and we will stick to this convention. Since we are exclusively dealing with real quantities, it will not be misidentified with complex conjugacy. Oftentimes, the mass matrix $M$ is lumped \citep{olovsson2005selective,tkachuk2013variational,cocchetti2013selective,oesterle2022intrinsically} with any suitable strategy such as the row-sum technique \cite{hughes2012finite} or the HRZ (Hinton-Rock-Zienkiewicz or diagonal scaling) method \cite{hinton1976note}. The scaling matrix $E$ is sometimes only defined locally, for each finite element, and the global matrix is then obtained from the usual assembly of element contributions. The success of mass scaling lies in the definition of $E$, which often depends on the problem and its discretization. The strategies proposed commonly fall in either one of the following categories:
\begin{itemize}
    \item \emph{Conventional mass scaling} (CMS), also sometimes called \emph{regular mass scaling}, only scales the diagonal entries of the mass matrix. It is called \emph{uniform} when all diagonal entries are scaled uniformly. Although it preserves the diagonal structure of the lumped mass matrix, it also scales down \emph{all} eigenvalues and adversely effects the dynamics, unless performed locally on critical elements within non-uniform meshes that severely constrain the step size \citep{olovsson2004selective,oesterle2022intrinsically,oesterle2023finite,hoffmann2023finite,cocchetti2013selective,cocchetti2015selective}. CMS is also typically used for scaling rotational degrees of freedom of beam or shell finite element formulations \cite{key1971transient,hughes1978reduced}. Despite being oftentimes inaccurate, this strategy is still widely used in commercial software packages, such as LS-DYNA \cite{hallquist2005ls}. Nevertheless, as shown in \cite{oesterle2022intrinsically,krauss2024intrinsically}, simply reformulating the original model may sometimes improve the accuracy.
    \item \emph{Selective mass scaling} (SMS), instead, selectively scales down the highest eigenfrequencies whose mode shapes are oftentimes described as structurally ``irrelevant'' \cite{oesterle2022intrinsically,oesterle2023finite}. Unfortunately, SMS usually leads to non-diagonal scaled mass matrices and requires solving a linear system at each time step. Not only does it ruin the fundamental concept of explicit time integration but it also requires a careful assessment of the tradeoff between the reduction of the number of iterations and the increased cost per iteration. Yet, the most successful mass scaling strategies fall in this second category and will be at the center of our attention in the forthcoming discussion.
\end{itemize}

As mentioned earlier, mass scaling strategies may be defined globally or locally. Global strategies often have appealing spectral properties but are rather impractical. Local strategies, instead, are more convenient and are usually favored for $C^0$ finite element discretizations, especially for beam, plate and shell formulations featuring both translational and rotational degrees of freedom (see e.g. \cite{key1971transient,hughes1978reduced,belytschko1980flexural}). However, their analysis is more tedious. In the sequel, we will distinguish these two cases as their treatment is fundamentally different.

\subsection{Global mass scaling}
\label{se: global_mass_scaling}
\subsubsection{Preliminaries}
In this section, we assume that the perturbation $E \in \mathbb{R}^{n \times n}$ is defined globally; i.e. its construction is oblivious to the underlying finite element method. Mass scaling, in the most general and abstract terms, may be seen as an eigenvalue perturbation problem. That is, given a symmetric matrix pair $(A,B)$ with $B$ positive definite, we must analyze how far off are the eigenvalues of $(A,B+E)$ with respect to those of $(A,B)$. The perturbation theory for generalized eigenproblems is the object of a rich literature, which has flourished ever since the groundbreaking work of Stewart in the late 1970s \cite{stewart1979pertubation}. However, most of the perturbation bounds are ill-suited for describing the properties of SMS since they bound some error measure of the eigenvalues \emph{uniformly} (i.e. independently of the eigenvalue number) whereas SMS predominantly perturbs selected eigenvalues. Qualitatively speaking though, the bounds indicate that small eigenvalues are less sensitive to perturbations than larger ones and might provide some early insight.

Instead of treating $E$ as some random perturbation, we will review some of the actual strategies proposed in the literature and position them in a more general context. It turns out none of methods reviewed in this section are fundamentally new. As a matter of fact, their theoretical foundations were already laid out decades ago \citep{stewart1979pertubation,stewart1990matrix,parlett1991symmetric}. We start by providing some definitions and recalling a few fundamental properties of generalized eigenproblems. Throughout this article, we consider the set of pairs of matrices $V := \mathbb{R}^{n \times n} \times \mathbb{R}^{n \times n}$, endowed with an obvious vector space structure:
\begin{itemize}
    \item $\forall (A,B),(C,D) \in V$,
    \begin{equation*}
        (A,B)+(C,D)=(A+C,B+D).
    \end{equation*}
    \item $\forall (A,B) \in V$, $\forall \mu \in \mathbb{R}$,
    \begin{equation*}
        \mu (A,B) = (\mu A, \mu B).
    \end{equation*}
\end{itemize}

In other words, $V := \mathbb{R}^{n \times n} \times \mathbb{R}^{n \times n}$ may be identified with $\mathbb{R}^{n \times 2n}$, to which it is isomorphic. This identification allows performing operations on matrix pairs as if they were block matrices. Moreover, we will exclusively focus on symmetric pairs (i.e. pairs formed by symmetric matrices) and in this context there exists a natural (partial) ordering.

\begin{definition}[Loewner partial order]
For two symmetric matrices $A,B \in \mathbb{R}^{n \times n}$, we write $A \succeq B$ (respectively $A \succ B$) if $A-B$ is positive semidefinite (respectively positive definite).
\end{definition}

For convenience, we also define the sets of positive (semi-)definite matrices.
\begin{definition}
\label{def: 1_SPSD_SPD_matrices}
The sets of symmetric positive semidefinite (SPSD) and symmetric positive definite (SPD) matrices of size $n$ are defined, respectively, as
\begin{equation*}
    \mathcal{S}_n=\{B \in \mathbb{R}^{n \times n} \colon B=B^T, B \succeq 0\} \quad \text{and} \quad \mathcal{S}_n^+=\{B \in \mathbb{R}^{n \times n} \colon B=B^T, B \succ 0\}.
\end{equation*}
\end{definition}

The analysis conducted in this article pertains to generalized eigenvalues and eigenvectors of matrix pairs. A pair $(\lambda, \mathbf{u}) \in \mathbb{C} \times \mathbb{C}^n$ with $\mathbf{u} \neq \mathbf{0}$ is called a generalized eigenpair of $(A,B)$ if
\begin{equation*}
    A \mathbf{u} = \lambda B \mathbf{u}.
\end{equation*}
This definition is called the \emph{standard form} of the eigenvalue problem. In contract, the mathematical literature commonly adopts the \emph{cross-product form}, whereby a pair $(\alpha, \beta) \in \mathbb{C}^2$ is called an eigenvalue if there exists $\mathbf{u} \neq \mathbf{0}$ such that
\begin{equation*}
    \beta A \mathbf{u}=\alpha B \mathbf{u}.
\end{equation*}
This representation treats $A$ and $B$ symmetrically. The corresponding generalized eigenvalue in standard form is recovered as $\lambda = \frac{\alpha}{\beta}$ (if $\beta \neq 0$ and is infinite otherwise). Obviously, this representation is non-unique since $(\alpha,\beta)$ and $(t \alpha, t \beta)$ (with $t \neq 0$) refers to the same eigenvalue. Thus, rigorously speaking, one must identify an eigenvalue with an equivalence class (see e.g. \cite{stewart1990matrix}). A matrix pair is called \emph{regular} if $(\alpha,\beta) \neq (0,0)$. We will exclusively consider regular matrix pairs in this work. More importantly, contrary to standard eigenproblems, the generalized eigenvalues of symmetric pairs are in general complex (see e.g. \citep{parlett1998symmetric,saad2011numerical}). The next lemma provides sufficient conditions guaranteeing real (nonnegative) eigenvalues.

\begin{lemma}[{\citep[][Theorem VI.1.15]{stewart1990matrix}}]
\label{lem: classical_problem}
Let $(A,B) \in \mathcal{S}_n \times \mathcal{S}_n^+$. Then, all generalized eigenvalues of $(A,B)$ are real nonnegative and there exists an invertible matrix $U \in \mathbb{R}^{n \times n}$ such that
\begin{equation*}
    U^TAU=D, \qquad U^TBU=I,
\end{equation*}
where $D=\diag(\lambda_1, \dots, \lambda_n)$ is a real nonnegative diagonal matrix containing the eigenvalues.
\end{lemma}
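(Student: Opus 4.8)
The plan is to reduce the generalized eigenproblem for $(A,B)$ to a standard symmetric eigenproblem by exploiting the positive definiteness of $B$. Since $B \in \mathcal{S}_n^+$, the spectral theorem applied to $B$ furnishes a symmetric positive definite square root $B^{1/2}$, which is invertible with symmetric inverse $B^{-1/2}$. I would then introduce the congruence-transformed matrix $C := B^{-1/2} A B^{-1/2}$ and note that $C$ is symmetric because $A$ is. The substitution $\mathbf{v} = B^{1/2}\mathbf{u}$ turns the defining relation $A\mathbf{u} = \lambda B\mathbf{u}$ into $C\mathbf{v} = \lambda \mathbf{v}$; since $B^{1/2}$ is a bijection, this is a genuine equivalence, so the generalized eigenvalues of $(A,B)$ coincide exactly with the ordinary eigenvalues of $C$. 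In particular, because $B$ is nonsingular the pair is regular and no eigenvalue is infinite.

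Next I would invoke the spectral theorem for the real symmetric matrix $C$: there is an orthogonal matrix $Q \in \mathbb{R}^{n\times n}$ such that $Q^T C Q = D = \diag(\lambda_1,\dots,\lambda_n)$ with the $\lambda_i$ real, establishing that all generalized eigenvalues are real. For the nonnegativity, I would use that $A \in \mathcal{S}_n$ means $A \succeq 0$, and that congruence preserves positive semidefiniteness: for any $\mathbf{x}$, $\mathbf{x}^T C \mathbf{x} = (B^{-1/2}\mathbf{x})^T A (B^{-1/2}\mathbf{x}) \ge 0$, so $C \succeq 0$ and hence every eigenvalue $\lambda_i \ge 0$.

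Finally I would assemble the diagonalizing matrix by setting $U := B^{-1/2} Q$, which is invertible as a product of invertible factors. A direct computation then gives $U^T A U = Q^T B^{-1/2} A B^{-1/2} Q = Q^T C Q = D$ and $U^T B U = Q^T B^{-1/2} B B^{-1/2} Q = Q^T Q = I$, using that $B^{-1/2}$ is symmetric and $B^{-1/2} B B^{-1/2} = I$. This yields both desired identities simultaneously with the same $U$, and the diagonal of $D$ contains precisely the generalized eigenvalues identified above.

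The argument is essentially classical and presents no deep obstacle; the point requiring the most care is the reduction step itself, namely establishing the existence of the symmetric positive definite square root $B^{-1/2}$ and verifying that the congruence genuinely transports the generalized eigenproblem onto the standard eigenproblem for $C$, so that the diagonal entries produced by the spectral theorem are exactly the generalized eigenvalues of $(A,B)$ rather than merely some related quantities. Once that equivalence is pinned down, realness, nonnegativity, and simultaneous diagonalization all follow mechanically.
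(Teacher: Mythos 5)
Your argument is correct and complete: the reduction via the symmetric positive definite square root $B^{1/2}$ to the standard symmetric eigenproblem for $C=B^{-1/2}AB^{-1/2}$, followed by the spectral theorem and the choice $U=B^{-1/2}Q$, is the classical proof of this simultaneous diagonalization result. Note that the paper itself offers no proof of \cref{lem: classical_problem} --- it is quoted directly from Stewart and Sun --- so there is nothing to compare against; your derivation is exactly the standard one found in the references, including the correct use of $A\succeq 0$ (the definition of $\mathcal{S}_n$ here) to obtain nonnegativity of the eigenvalues by congruence.
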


It can easily be shown that all eigenvalues are additionally positive if $A \in \mathcal{S}_n^+$. This condition will always be fulfilled in the forthcoming analysis. To avoid potential confusion, we will often specify the matrix pair when referring to eigenvalues, which are commonly numbered in ascending algebraic order; i.e.
\begin{equation}
\label{eq: ordering}
    \lambda_1(A,B) \leq \lambda_2(A,B) \leq \dots \leq \lambda_n(A,B).
\end{equation}
The set of all eigenvalues (i.e. the spectrum) will be denoted $\Lambda(A,B)$. Furthermore, in our context, we define an \emph{eigenfrequency} as $\omega = \sqrt{\lambda}$. The next lemma is the main building block for mass scaling techniques.

\begin{lemma}[{\citep[][Corollary 3.6]{voet2024robust}}]
\label{lem: equivalence_conditions}
Let $A \in \mathcal{S}_n$, $B,\widetilde{B} \in \mathcal{S}_n^+$ and denote $E=\widetilde{B}-B$. Then the statements
\begin{enumerate}[noitemsep]
    \item $E \succeq 0$, \label{eq: eq_cond_1}
    \item $\widetilde{B} \succeq B$, \label{eq: eq_cond_2}
    \item $\Lambda(B,\widetilde{B}) \subset (0,1]$, \label{eq: eq_cond_3}
\end{enumerate}
are all equivalent and imply that $\lambda_k(A,\widetilde{B}) \leq \lambda_k(A,B)$ for all $k=1,\dots,n$.
\end{lemma}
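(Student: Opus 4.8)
The plan is to settle the three-way equivalence first and then extract the monotonicity inequality from a min--max characterization. The equivalence of the first two statements is immediate from the definition of the Loewner order, since $\widetilde{B} \succeq B$ means exactly that $\widetilde{B}-B = E \succeq 0$. For the equivalence with the third statement I would start from \cref{lem: classical_problem}: because $B,\widetilde{B} \in \mathcal{S}_n^+$, the pencil $(B,\widetilde{B})$ has only positive eigenvalues, so $\Lambda(B,\widetilde{B}) \subset (0,\infty)$ holds unconditionally. It then remains to control the largest one. Using the generalized Rayleigh quotient, $\lambda_n(B,\widetilde{B}) = \max_{\mathbf{x} \neq \mathbf{0}} \frac{\mathbf{x}^T B \mathbf{x}}{\mathbf{x}^T \widetilde{B} \mathbf{x}}$, and since $\widetilde{B} \succ 0$ keeps the denominator positive, this maximum is at most $1$ if and only if $\mathbf{x}^T B \mathbf{x} \leq \mathbf{x}^T \widetilde{B} \mathbf{x}$ for all $\mathbf{x}$, i.e. if and only if $\widetilde{B} \succeq B$. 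Combined with the positivity above, this is precisely $\Lambda(B,\widetilde{B}) \subset (0,1]$.

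For the eigenvalue inequality I would invoke the Courant--Fischer min--max theorem for symmetric-definite pencils, which is licensed by $B,\widetilde{B} \in \mathcal{S}_n^+$ and gives
\[
\lambda_k(A,\widetilde{B}) = \min_{\substack{\mathcal{V} \subseteq \mathbb{R}^n \\ \dim \mathcal{V} = k}} \ \max_{\substack{\mathbf{x} \in \mathcal{V} \\ \mathbf{x} \neq \mathbf{0}}} \frac{\mathbf{x}^T A \mathbf{x}}{\mathbf{x}^T \widetilde{B} \mathbf{x}},
\]
and likewise for $(A,B)$. The crux is a pointwise comparison of the two Rayleigh quotients. Here the hypothesis $A \in \mathcal{S}_n$ is essential: since $A \succeq 0$ we have $\mathbf{x}^T A \mathbf{x} \geq 0$ for every $\mathbf{x}$, while $\widetilde{B} \succeq B \succ 0$ gives $\mathbf{x}^T \widetilde{B} \mathbf{x} \geq \mathbf{x}^T B \mathbf{x} > 0$; dividing a nonnegative numerator by the larger positive denominator yields $\frac{\mathbf{x}^T A \mathbf{x}}{\mathbf{x}^T \widetilde{B} \mathbf{x}} \leq \frac{\mathbf{x}^T A \mathbf{x}}{\mathbf{x}^T B \mathbf{x}}$ for all $\mathbf{x} \neq \mathbf{0}$. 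Since this inequality holds on every subspace, it survives the inner maximum and then the outer minimum over the same family of $k$-dimensional subspaces, delivering $\lambda_k(A,\widetilde{B}) \leq \lambda_k(A,B)$ for each $k$.

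The step I expect to require the most care is the sign bookkeeping in the pointwise comparison: if $A$ were merely symmetric and indefinite, the inequality would reverse on the subspace where $\mathbf{x}^T A \mathbf{x} < 0$ and the min--max argument would collapse. The positive semidefiniteness of $A$ is exactly what removes this obstruction, so I would flag it explicitly rather than treat it as routine. As a robustness check, and an alternative that leans on \cref{lem: classical_problem} instead of quoting Courant--Fischer, I would note that one may simultaneously diagonalize $(A,B)$ by a congruence $U$ with $U^T A U = D = \diag(\lambda_1(A,B),\dots,\lambda_n(A,B))$ and $U^T B U = I$; then $U^T \widetilde{B} U = I + U^T E U$ with $U^T E U \succeq 0$, and because generalized eigenvalues are invariant under congruence the claim reduces to the special case $\lambda_k(D, I+F) \leq \lambda_k(D,I)$ with $D \succeq 0$ and $F \succeq 0$, which is again closed by the same Rayleigh-quotient comparison.
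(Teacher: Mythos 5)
Your argument is correct. Note that the paper itself gives no proof of this lemma: it is quoted verbatim from [voet2024robust, Corollary 3.6], so there is no in-paper proof to compare against. Your route --- the trivial equivalence of (1) and (2), the Rayleigh-quotient characterization of $\lambda_n(B,\widetilde{B})$ for (3), and the Courant--Fischer min--max comparison with the pointwise inequality $\frac{\mathbf{x}^T A \mathbf{x}}{\mathbf{x}^T \widetilde{B} \mathbf{x}} \leq \frac{\mathbf{x}^T A \mathbf{x}}{\mathbf{x}^T B \mathbf{x}}$ --- is the standard and expected argument, and your explicit flagging of where $A \succeq 0$ is used is exactly the right point to emphasize.
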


Thus, in particular, positive semi-definiteness of the scaling matrix $E$ guarantees a decrease of the generalized eigenvalues. This is usually the first prerequisite in designing mass scaling techniques.

\subsubsection{Linear fractional transformations}
Modifying a matrix pair generally modifies its eigenvalues nontrivially. However, there exist special transformations for which the transformed eigenvalues are known explicitly. Linear Fractional Transformations are one of them.

\begin{definition}[Linear factional transformation]
\label{def: linear_frac_transform}
A linear application $L \colon V \to V$ defined as 
\begin{equation*}
    L(A,B) := (w_{11}A+w_{21}B, w_{12}A+w_{22}B)
\end{equation*}
where $w_{ij} \in \mathbb{R}$ for $i,j=1,2$ is called a \emph{linear fractional transformation} (LFT).
\end{definition}

LFTs are analogous to shift-and-invert strategies \citep{parlett1998symmetric,saad2011numerical} and are widely used within eigensolvers for accelerating the convergence to desired eigenvalues. In this context, they are better known as \textit{spectral transformations} \cite{ericsson1980spectral}. The upcoming lemma and subsequent remark will better explain the name given to this type of transformation.

\begin{remark}
As noted in \cite{stewart1990matrix}, by gathering the coefficients $w_{ij}$ in a matrix $W \in \mathbb{R}^{2 \times 2}$, the application may be compactly expressed as
\begin{equation*}
    L(A,B)=(A,B)(W \otimes I_n)
\end{equation*}
and is entirely determined by the matrix $W$.
\end{remark}

\begin{definition}[Non-degenerate LFT]
An LFT is called \emph{degenerate} if $\det(W)=0$ and is called \emph{non-degenerate} otherwise.
\end{definition}

As shown in the following technical lemma \citep[][Theorem VI.1.6]{stewart1990matrix} and \citep[][Theorem 9.1]{saad2011numerical}, LFTs do not affect the eigenvectors and transform the eigenvalues in a simple manner. We include its proof, which is an exercise in \cite{stewart1990matrix}.

\begin{lemma}
\label{lem: matrix_pencil_translation}
Let $(A,B)$ be a regular matrix pencil and $L(A,B)$ be a non-degenerate LFT. Then,
\begin{itemize}[noitemsep]
    \item The eigenvectors of $(A,B)$ and $L(A,B)$ are the same.
    \item The eigenvalues $(\alpha,\beta)$ of $(A,B)$ are related to the eigenvalues $(\overline{\alpha},\overline{\beta})$ of $L(A,B)$ through the relation
    \begin{equation*}
    (\overline{\alpha},\overline{\beta}) = (\alpha,\beta)W.
\end{equation*}
\end{itemize}
\end{lemma}
\begin{proof}
Let $(\alpha,\beta)$ be an eigenvalue of $(A,B)$ associated to an eigenvector $\mathbf{u}$. The eigenvalue equation $\beta A \mathbf{u}=\alpha B \mathbf{u}$ can be expressed as $(A,B)(\mathbf{v} \otimes \mathbf{u})=0$ where $\mathbf{v}^T=(\beta, -\alpha)$. Now we note that
\begin{equation*}
    (A,B)(\mathbf{v} \otimes \mathbf{u})=(A,B)(WW^{-1} \otimes I_n)(\mathbf{v} \otimes \mathbf{u})=(A,B)(W \otimes I_n)(W^{-1}\mathbf{v} \otimes \mathbf{u})=L(A,B)(\overline{\mathbf{v}} \otimes \mathbf{u})
\end{equation*}
where we have defined $\overline{\mathbf{v}}=W^{-1}\mathbf{v}$. Thus, $\mathbf{u}$ is also an eigenvector of $L(A,B)$ and its associated eigenvalue is determined from $(\overline{\beta},-\overline{\alpha})=(\beta,-\alpha)W^{-T}$. The statement follows after rewriting and simplifying this relation.
\end{proof}

\Cref{lem: matrix_pencil_translation} shows that there is a one-to-one relation between the eigenvalues of $(A,B)$ and $L(A,B)$ via the invertible matrix $W$. The name \emph{linear fractional transformation} now takes on its full meaning since the transformed eigenvalues (in standard form) are given by
\begin{equation*}
    \overline{\lambda}=\frac{\overline{\alpha}}{\overline{\beta}}=\frac{w_{11}\alpha+w_{21}\beta}{w_{12}\alpha+w_{22}\beta}=\frac{w_{11}\lambda+w_{21}}{w_{12}\lambda+w_{22}}=f(\lambda).
\end{equation*}
On its interval of definition, one easily shows that $f(\lambda)$ is a strictly increasing (resp. strictly decreasing) function of $\lambda$ if $\det(W) > 0$ (resp. $\det(W) < 0$). Thus, the former preserves the eigenvalue numbering. Among the mass scaling strategies proposed in the literature, two of them are LFTs:

\begin{itemize}
    \item Uniform mass scaling is an LFT with
    \begin{equation*}
        W=
    \begin{pmatrix}
        1 & 0 \\
        0 & \mu
    \end{pmatrix}
    \iff \overline{\lambda}=\frac{\lambda}{\mu}.
    \end{equation*}
\item \emph{Stiffness proportional} SMS \citep{macek1995mass,olovsson2005selective} in an LFT with
    \begin{equation*}
    W=
    \begin{pmatrix}
        1 & \mu \\
        0 & 1
    \end{pmatrix}
    \iff \overline{\lambda}=\frac{\lambda}{\mu\lambda+1}.
    \end{equation*}
\end{itemize}
Uniform mass scaling simply scales all eigenvalues by $\mu > 0$ and therefore also affects the smallest ones. This impacts the accuracy of the dynamics, as testified by the numerical experiments in \cite{olovsson2004selective}. On the contrary, if the scaling parameter is wisely chosen, stiffness proportional mass scaling allows to selectively scale down the largest eigenvalues while nearly preserving the smallest ones. Olovsson et al. \cite{olovsson2005selective} recommend choosing $\mu=10^k/\lambda_n$ with $k \in \mathbb{N}$. If $\lambda_n/\lambda_1 \gg 10^{k}$, the smallest eigenvalues are barely affected while the largest ones are roughly scaled down by a factor of $10^k$. In practice, computing $\mu$ only requires a (coarse) estimation for $\lambda_n$. Unfortunately, the pleasing property of stiffness proportional SMS comes at the price of a non-diagonal scaled mass matrix $\overline{M}=M+\mu K$. In fact, if one uses this scaled mass matrix in an explicit scheme for linear elasto-dynamics, he might as well consider using an implicit unconditionally stable Newmark method from the very start, whose workload per iteration is similar. The method is even less suited to nonlinear problems undergoing large rotations and deformations as the stiffness matrix changes during the course of the simulation and is typically only accessible through matrix-vector products. Stiffness proportional SMS would require reassembling the stiffness matrix at each iteration, which is prohibitively expensive and is the main reason why, according to some authors \citep{oesterle2023finite,tkachuk2015direct}, the method has not been implemented in commercial software packages. Nevertheless, according to Askes et al. \cite{askes2011increasing}, stiffness proportional SMS is employed in other branches of mechanics, albeit under different names.

Instead of working with the original matrix pair $(A,B)$, if $B$ is invertible, one might consider the equivalent pair $(B^{-1}A,I)$, which shares the same eigenvalues and eigenvectors as $(A,B)$. This reduces the generalized eigenvalue problem to standard form and enabled the authors in \cite{tkachuk2014local} to generalize the LFT to rational polynomials and even virtually any function. Given a matrix $A \in \mathbb{C}^{n \times n}$ and a function $f \colon \mathbb{C} \to \mathbb{C}$ well defined on the spectrum of $A$, it is well-known that if $(\lambda, \mathbf{u})$ is an eigenpair of $A$, then $(f(\lambda),\mathbf{u})$ is an eigenpair of $f(A)$ \cite{higham2008functions}. In \cite{tkachuk2014local} the authors applied this general result to $M^{-1}K$ with a function
\begin{equation*}
    f(t) = \frac{t}{1+p(t)}
\end{equation*}
where $p(t) \in \mathbb{R}_d[t]$ is a real coefficient polynomial of degree $d$. After rewriting $f(M^{-1}K)$ as a matrix pair, they considered using $(K,M(I + p(M^{-1}K)))$ with the second degree polynomial $p(t)=ct^2$ for some coefficient $c>0$. Although the method (refered to therein as \emph{polynomial matrix} SMS) generalizes uniform and stiffness proportional SMS to arbitrary polynomials, it suffers from the same drawbacks as stiffness proportional SMS. Moreover, it explicitly features the inverse of $M$ for any degree $d \geq 2$ and is therefore practically limited to diagonal (lumped) mass matrices. We are not aware if any further research alleviating this issue.

\subsubsection{Global deflation}
\label{se: global_deflation}
Instead of transforming the entire spectrum, it is also possible to modify only a few selected eigenvalues. In numerical linear algebra, this procedure is commonly known as \emph{deflation}. Its origin dates back to the early work of Hotelling \cite{hotelling1943some} and is well-known for standard eigenproblems. For generalized eigenproblems, the procedure is described e.g. in \cite{saad2011numerical}. For $C^0$ finite element discretizations, González and Park \cite{gonzalez2020large} revisited this idea and improved the earlier work of Tkachuk and Bischoff \cite{tkachuk2014local} by proposing a fast solver for the scaled mass matrix based on the Woodbury matrix identity. While the method remains impractical for $C^0$ discretizations due to the sheer number of inaccurate frequencies, it becomes much more appealing for maximally smooth isogeometric discretizations featuring far fewer inaccurate frequencies \cite{cottrell2006isogeometric,hughes2014finite,voet2024robust}. It is briefly recalled here.

\begin{definition}[Scaled matrix pair]
\label{def: deflated_pencil}
Let $(A,B) \in \mathcal{S}_n \times \mathcal{S}_n^+$ and $f$, $g$ be two functions defined on the spectrum of $(A,B)$. The scaled matrix pair $(\overline{A},\overline{B})$ is defined as
\begin{align*}
    \overline{A} &= A+Vf(D_2)V^T, \\
    \overline{B} &= B+Vg(D_2)V^T,
\end{align*}
where $V=BU_2 \in \mathbb{R}^{n \times r}$, with $U_2=[\mathbf{u}_{n-r+1}, \dots, \mathbf{u}_n]$ the matrix formed by the last $r$ $B$-orthonormal eigenvectors of $(A,B)$ and $D_2=\diag(\lambda_{n-r+1}, \dots, \lambda_n) \in \mathbb{R}^{r \times r}$ the diagonal matrix formed by the last $r$ eigenvalues with $r \ll n$.    
\end{definition}

\begin{theorem}[{\citep[][Theorem 4.3]{voet2024robust}}]
\label{th: low_rank_pert_AB}
Let $(A,B) \in \mathcal{S}_n \times \mathcal{S}_n^+$ and $(\overline{A},\overline{B})$ be the scaled matrix pair introduced in \Cref{def: deflated_pencil}. Then,
\begin{itemize}[noitemsep]
    \item The eigenvectors of $(A,B)$ and $(\overline{A},\overline{B})$ are the same.
    \item The eigenvalues of $(\overline{A},\overline{B})$ are given by:
    \begin{equation*}
    \overline{\lambda}_{i_k}=
    \begin{cases}
    \lambda_k & \text{ for } k=1,\dots,n-r, \\
    \frac{\lambda_k+f(\lambda_k)}{1+g(\lambda_k)} & \text{ for } k=n-r+1,\dots,n.
    \end{cases}
    \end{equation*}
\end{itemize}
\end{theorem}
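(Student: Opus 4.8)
The plan is to reduce the scaled pair to a \emph{diagonal} pair through a single congruence transformation, exploiting the simultaneous diagonalization granted by \cref{lem: classical_problem}. First I would apply that lemma to obtain an invertible $U=[\mathbf{u}_1,\dots,\mathbf{u}_n]\in\mathbb{R}^{n\times n}$ with $U^TAU=D=\diag(\lambda_1,\dots,\lambda_n)$ and $U^TBU=I$; its columns are exactly the $B$-orthonormal eigenvectors of \cref{def: deflated_pencil}, and $U_2$ is the block of its last $r$ columns. Since a congruence transformation preserves generalized eigenvalues — if $\overline{A}\mathbf{u}=\lambda\overline{B}\mathbf{u}$ then $U^T\overline{A}U\,\mathbf{y}=\lambda\,U^T\overline{B}U\,\mathbf{y}$ with $\mathbf{u}=U\mathbf{y}$, and the map $\mathbf{y}\mapsto U\mathbf{y}$ is a bijection on eigenvectors — it suffices to study the transformed pair $(U^T\overline{A}U,\,U^T\overline{B}U)$.

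The decisive step is to evaluate $U^TV$. From $U^TBU=I$ we read off $U^TB=U^{-1}$, whence $U^TV=U^TBU_2=U^{-1}U_2=P_2$, where $P_2\in\mathbb{R}^{n\times r}$ is the selection matrix whose top $n-r$ rows vanish and whose bottom $r\times r$ block equals $I_r$ (because $U_2$ is simply the last $r$ columns of $U$). This is precisely where the $B$-orthonormality of the eigenvectors does all the work: it confines the rank-$r$ update to the trailing $r$ coordinates of the diagonalized problem. I expect this to be the only genuinely delicate point; everything downstream is mechanical once $U^TV=P_2$ is in hand.

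With this identity I would then compute
\begin{align*}
    U^T\overline{B}U &= U^TBU + (U^TV)\,g(D_2)\,(U^TV)^T = I + P_2\,g(D_2)\,P_2^T, \\
    U^T\overline{A}U &= U^TAU + (U^TV)\,f(D_2)\,(U^TV)^T = D + P_2\,f(D_2)\,P_2^T.
\end{align*}
Partitioning the indices into $\{1,\dots,n-r\}$ and $\{n-r+1,\dots,n\}$, both transformed matrices are block diagonal, and the pair becomes $\bigl(\diag(D_1,\,D_2+f(D_2)),\,\diag(I_{n-r},\,I_r+g(D_2))\bigr)$ with $D_1=\diag(\lambda_1,\dots,\lambda_{n-r})$. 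The generalized eigenvalues of a diagonal pair are the entrywise ratios of the diagonals, which yields $\lambda_k$ for $k\le n-r$ and $\frac{\lambda_k+f(\lambda_k)}{1+g(\lambda_k)}$ for $k>n-r$, exactly the claimed formula (the index $i_k$ merely reorders the scaled eigenvalues into ascending order). Finally, the eigenvectors of the diagonal transformed pair are the canonical basis vectors $\mathbf{e}_k$, so those of $(\overline{A},\overline{B})$ are $U\mathbf{e}_k=\mathbf{u}_k$ — identical to the eigenvectors of $(A,B)$, completing both assertions.
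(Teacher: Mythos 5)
Your proof is correct. The paper itself does not supply a proof of this theorem (it is imported verbatim from \citep[][Theorem 4.3]{voet2024robust}), so there is nothing to compare line by line; but your argument — simultaneous diagonalization via \cref{lem: classical_problem}, the observation that $U^TV=U^TBU_2=U^{-1}U_2=P_2$ confines the update to the trailing $r$ coordinates, and the reduction to a diagonal pair — is exactly the computation that underlies the cited result, merely organized as one global congruence rather than a vector-by-vector verification of $\overline{A}\mathbf{u}_k=\overline{\lambda}\,\overline{B}\mathbf{u}_k$ using $V^T\mathbf{u}_k=U_2^TB\mathbf{u}_k$. The only point worth adding for completeness is a word on regularity: the entrywise-ratio formula for the diagonal pair presumes $1+g(\lambda_k)\neq 0$ (equivalently $\overline{B}\succ 0$ when $g\geq 0$, which holds in all the mass scaling instances of \cref{eq: mass_scaling}); otherwise the corresponding eigenvalue must be read in cross-product form as $(\lambda_k+f(\lambda_k),\,1+g(\lambda_k))$.
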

In the context of mass scaling, one typically sets
\begin{equation}
\label{eq: mass_scaling}
    f(\lambda)=0 \quad \text{and} \quad g(\lambda)=\frac{\lambda}{\lambda_{n-r}}-1,
\end{equation}
and the transformed eigenvalues are then given by
\begin{equation*}
    \overline{\lambda}_{k}=
    \begin{cases}
    \lambda_k & \text{ for } k=1,\dots,n-r, \\
    \lambda_{n-r} & \text{ for } k=n-r+1,\dots,n.
    \end{cases}
\end{equation*}
The choices of $f$ and $g$ are not unique and one may equivalently choose
\begin{equation*}
    f(\lambda)=\lambda_{n-r}-\lambda \quad \text{and} \quad g(\lambda)=0,
\end{equation*}
which would instead scale the stiffness matrix. Optionally, $\lambda_{n-r}$ can be substituted with a cutoff value, as suggested in \cite{tkachuk2014local,gonzalez2020large}. However, choosing $\lambda_{n-r}$ preserves the eigenvalue ordering and graphically shaves off the upper part of the spectrum (see Figure \ref{fig: truncation}). The increase of the critical time step for the central difference method is directly appreciated since
\begin{equation*}
    \frac{\overline{\Delta t}_c}{\Delta t_c}=\sqrt{\frac{\lambda_n}{\lambda_{n-r}}}.
\end{equation*}

\begin{figure}[H]
    \centering
    \includegraphics[width=0.6\textwidth]{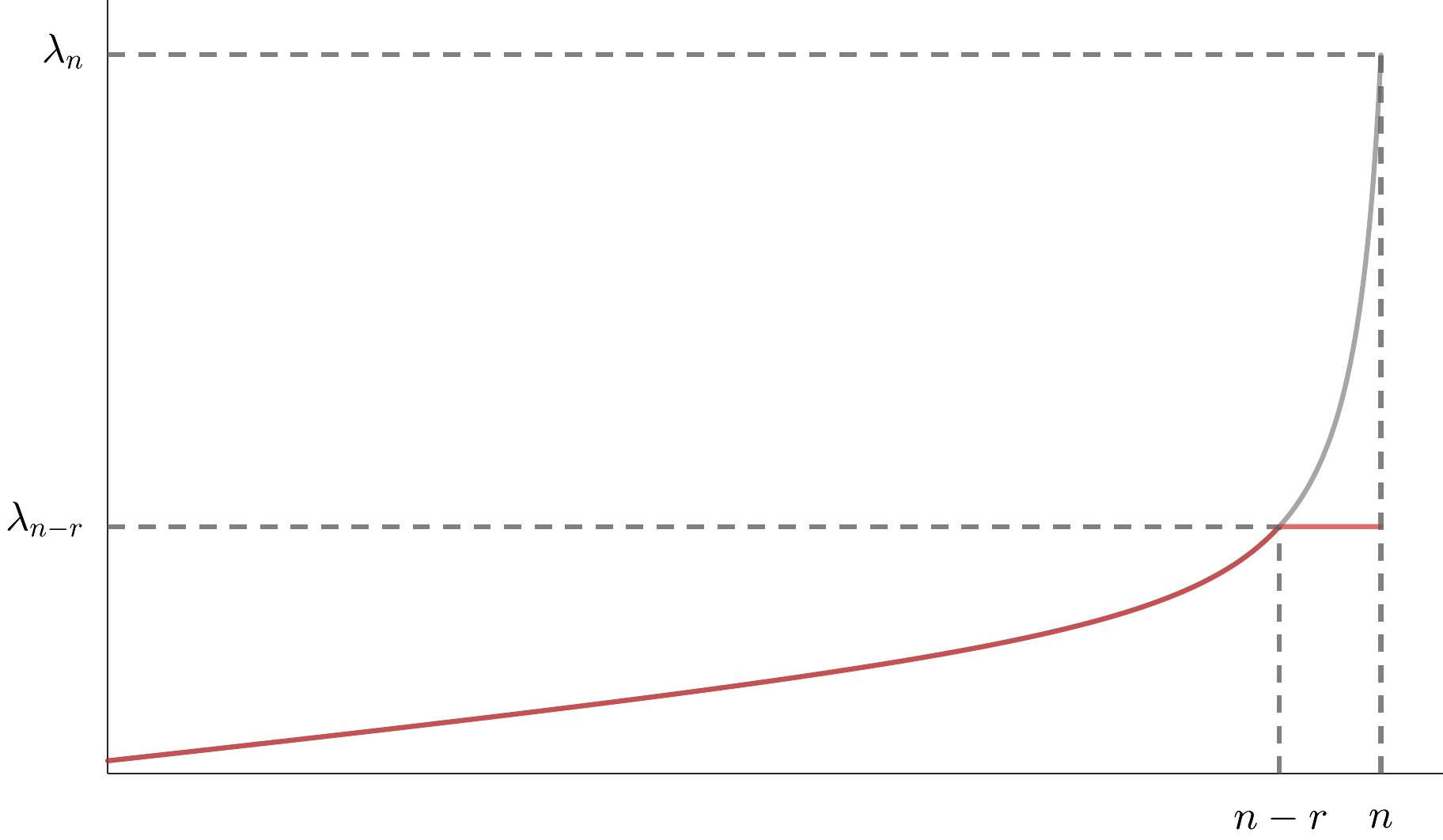}
    \caption{Truncation of the largest eigenvalues}
    \label{fig: truncation}
\end{figure}

Although the scaled mass matrix is non-diagonal, \citep[][Equation (28)]{gonzalez2020large} provides an explicit expression for its inverse, based on the Woodbury matrix identity. Unfortunately, this strategy still requires computing some of the largest eigenpairs of a (generally very large) matrix pair, which is prohibitively expensive for $C^0$ finite element discretizations due to the ``optical branches'' of the spectrum contributing too many inaccurate high frequencies. The method becomes much more affordable and even realistic for maximally smooth isogeometric discretizations featuring far fewer inaccurate high frequencies \citep[][Theorems A.1 and A.2]{voet2024robust} whose separation from the rest of the spectrum is also a key advantage in accelerating eigensolvers based on Krylov subspaces. Alternatively, deflation strategies may rely on low-frequency modes combined with a projector in the orthogonal complement \cite{gonzalez2020large}. Although such methods do not offer any computational advantages in this setting, they form the cornerstone of inexact ad hoc deflation strategies reviewed in \Cref{se: olovsson,se: hoffmann}.

\Cref{th: low_rank_pert_AB} shows that the eigenspaces of the original and scaled matrix pairs defined in \Cref{def: deflated_pencil} are exactly the same. For more general mass scaling strategies, they are usually different although they might intersect. The next lemma recalls an elementary albeit important fact. 

\begin{lemma}
\label{lem: common_eigenvector}
If $(\lambda, \mathbf{u})$ and $(\mu, \mathbf{u})$ are eigenpairs of $(A,B)$ and $(B,C)$, respectively, then $(\lambda \mu, \mathbf{u})$ is an eigenpair of $(A,C)$.  
\end{lemma}
\begin{proof}
If $(\lambda, \mathbf{u})$ and $(\mu, \mathbf{u})$ are eigenpairs of $(A,B)$ and $(B,C)$, respectively, then
\begin{equation*}
    A\mathbf{u}=\lambda B \mathbf{u} = \lambda \mu C \mathbf{u}
\end{equation*}
showing that $\lambda \mu$ is an eigenvalue of $(A,C)$.
\end{proof}
The previous result remains practically relevant in case the eigenspaces nearly intersect. This is also probably the main reason underpinning the success of mass lumping strategies, which tend to well approximate low-frequency eigenmodes. Note that if a vector $\mathbf{u}$ is known to be an eigenvector of $(A,B)$ and $(B,C)$, then the corresponding eigenvalue of $(A,C)$ may be directly computed as
\begin{equation*}
    \lambda(A,C)=R(\mathbf{u})Q(\mathbf{u}),
\end{equation*}
where
\begin{equation*}
    R(\mathbf{x})=\frac{\mathbf{x}^T A \mathbf{x}}{\mathbf{x}^T B \mathbf{x}} \quad \text{and} \quad Q(\mathbf{x})=\frac{\mathbf{x}^T B \mathbf{x}}{\mathbf{x}^T C \mathbf{x}}
\end{equation*}
are the Rayleigh quotients.

\subsection{Local mass scaling}
\label{se: local_mass_scaling}
\subsubsection{Preliminaries}
Some of the global methods we have previously reviewed also have a local counterpart. The two approaches rarely coincide, expect for uniform or stiffness proportional SMS, provided the scaling parameter is chosen uniformly across elements. In general though, fully assembled system matrices do not immediately inherit local properties from element contributions. Nevertheless, there often exist very useful connections. We will first recall some of them and later analyze a few practical schemes proposed in the literature.

\begin{theorem}[{\citep[][Theorem 3.4]{voet2023mathematical}}]
\label{th: eig_bounds}
Let $A \in \mathcal{S}_n$, $B,C \in \mathcal{S}_n^+$ and let all eigenvalues be numbered in ascending algebraic order. Then
\begin{subequations}
\begin{align}
    \lambda_k(A,C)\lambda_1(C,B) &\leq \lambda_k(A,B) \leq \lambda_k(A,C)\lambda_n(C,B) \qquad 1 \leq k \leq n, \label{eq: ineq1}\\
    \lambda_1(A,C)\lambda_k(C,B) &\leq \lambda_k(A,B) \leq \lambda_n(A,C)\lambda_k(C,B) \qquad 1 \leq k \leq n. \label{eq: ineq2}
\end{align}
\end{subequations}
\end{theorem}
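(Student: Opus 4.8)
The plan is to reduce everything to the Courant--Fischer min-max characterization of generalized eigenvalues and to exploit the \emph{multiplicativity} of Rayleigh quotients. For a symmetric pair whose second matrix is positive definite, a Cholesky factorization of that matrix reduces the generalized problem to a standard one, so that
\[
\lambda_k(A,B)=\min_{\substack{S \subseteq \mathbb{R}^n \\ \dim S=k}} \; \max_{\mathbf{0} \neq \mathbf{x} \in S} \frac{\mathbf{x}^T A \mathbf{x}}{\mathbf{x}^T B \mathbf{x}},
\]
and analogously for the pairs $(A,C)$ and $(C,B)$. The key observation is the identity
\[
\frac{\mathbf{x}^T A \mathbf{x}}{\mathbf{x}^T B \mathbf{x}} = \frac{\mathbf{x}^T A \mathbf{x}}{\mathbf{x}^T C \mathbf{x}} \cdot \frac{\mathbf{x}^T C \mathbf{x}}{\mathbf{x}^T B \mathbf{x}}, \qquad \mathbf{x} \neq \mathbf{0}.
\]
Since $A \in \mathcal{S}_n$ and $C \in \mathcal{S}_n^+$, the first factor is nonnegative; since $B,C \in \mathcal{S}_n^+$, the second factor lies in $[\lambda_1(C,B), \lambda_n(C,B)]$ with $\lambda_1(C,B)>0$.

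To obtain \eqref{eq: ineq1}, I would bound the \emph{second} factor by its extreme eigenvalues. Because the first factor is nonnegative, multiplying the pointwise inequalities $\lambda_1(C,B) \leq \frac{\mathbf{x}^T C \mathbf{x}}{\mathbf{x}^T B \mathbf{x}} \leq \lambda_n(C,B)$ by it preserves their direction, yielding
\[
\lambda_1(C,B)\,\frac{\mathbf{x}^T A \mathbf{x}}{\mathbf{x}^T C \mathbf{x}} \;\leq\; \frac{\mathbf{x}^T A \mathbf{x}}{\mathbf{x}^T B \mathbf{x}} \;\leq\; \lambda_n(C,B)\,\frac{\mathbf{x}^T A \mathbf{x}}{\mathbf{x}^T C \mathbf{x}}
\]
for every $\mathbf{x} \neq \mathbf{0}$. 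Since the multiplicative constants are positive, these pointwise bounds survive the $\max$ over any fixed $k$-dimensional subspace and the subsequent $\min$ over all such subspaces. Applying Courant--Fischer to $(A,B)$ on the middle term and to $(A,C)$ on the outer terms then produces $\lambda_k(A,C)\lambda_1(C,B) \leq \lambda_k(A,B) \leq \lambda_k(A,C)\lambda_n(C,B)$.

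For \eqref{eq: ineq2} I would instead bound the \emph{first} factor. As $B,C \in \mathcal{S}_n^+$, the second factor $\frac{\mathbf{x}^T C \mathbf{x}}{\mathbf{x}^T B \mathbf{x}}$ is strictly positive, so multiplying $\lambda_1(A,C) \leq \frac{\mathbf{x}^T A \mathbf{x}}{\mathbf{x}^T C \mathbf{x}} \leq \lambda_n(A,C)$ by it again preserves the inequality directions. Propagating through the same $\max$--$\min$ argument, now invoking Courant--Fischer on the pair $(C,B)$, gives $\lambda_1(A,C)\lambda_k(C,B) \leq \lambda_k(A,B) \leq \lambda_n(A,C)\lambda_k(C,B)$.

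The individual steps are elementary once the factorization of the Rayleigh quotient is in place; the one point requiring genuine care is the propagation of a pointwise bound through the nested extremization. Concretely, I must verify that if $f(\mathbf{x}) \geq c\, g(\mathbf{x})$ for all $\mathbf{x}$ with $c>0$, then $\min_S \max_{\mathbf{x} \in S} f \geq c \min_S \max_{\mathbf{x} \in S} g$, together with its upper-bound counterpart — a routine but essential monotonicity check. I expect the crucial structural ingredient to be the \emph{positivity} of the multiplying factors, guaranteed by $A \succeq 0$ and $B,C \succ 0$; without it the inequality directions could flip and the clean two-sided bounds would collapse.
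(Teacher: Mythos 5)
Your proof is correct: the factorization of the Rayleigh quotient combined with the Courant--Fischer min-max characterization (and the observation that nonnegative multiplicative constants pass through the nested $\min$--$\max$) delivers both chains of inequalities. The paper itself states this theorem only by citation to \citep[Theorem 3.4]{voet2023mathematical} without reproducing a proof, and your argument is essentially the standard one used there, so there is nothing to flag.
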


A direct application of Theorem \ref{th: eig_bounds}, inequalities \eqref{eq: ineq1}, with $A=K$, $B=M$ and $C=\overline{M}$ leads to
\begin{equation}
\label{eq: eig_pert_bounds_mass}
   \lambda_1(\overline{M},M) \leq \frac{\lambda_k(K,M)}{\lambda_k(K,\overline{M})} \leq \lambda_n(\overline{M},M) \qquad 1 \leq k \leq n. 
\end{equation}
This inequality will play a central role for deriving time step estimates. As we have discussed, bounds on the condition number of the scaled mass matrix are also desirable if linear systems with the latter are solved iteratively. We recall that the spectral condition number of a symmetric positive definite matrix $A$ is defined as $\kappa(A)=\lambda_n(A)/\lambda_1(A)$. Analogously, we define the condition number of a pair of symmetric positive definite matrices $(A,B)$ as $\kappa(A,B)=\lambda_n(A,B)/\lambda_1(A,B)$. The next corollary provides a useful inequality relating the two.

\begin{corollary}
\label{cor: conditioning_bound}
Let $A,B \in \mathcal{S}_n^+$ and let $\kappa$ denote the spectral condition number. Then
\begin{equation*}
    \frac{\kappa(A)}{\kappa(B)} \leq \kappa(A,B).
\end{equation*}
\end{corollary}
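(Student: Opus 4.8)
The plan is to invoke \Cref{th: eig_bounds} with the particular choice $C=I$, which turns the generalized quantities appearing there into ordinary spectral data of $A$ and $B$. First I would record two elementary facts: on the one hand, $\lambda_k(A,I)=\lambda_k(A)$ for every $k$, so the pair $(A,I)$ reproduces the usual eigenvalues of $A$; on the other hand, the eigenvalue equation for $(I,B)$ reads $\mathbf{u}=\lambda B\mathbf{u}$, so the eigenvalues of $(I,B)$ are the reciprocals of those of $B$, whence $\lambda_1(I,B)=1/\lambda_n(B)$ and $\lambda_n(I,B)=1/\lambda_1(B)$. Both $C=I$ and $B$ lie in $\mathcal{S}_n^+$, so the hypotheses of \Cref{th: eig_bounds} are met.

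With these substitutions, inequality \eqref{eq: ineq1} becomes
\begin{equation*}
    \frac{\lambda_k(A)}{\lambda_n(B)} \leq \lambda_k(A,B) \leq \frac{\lambda_k(A)}{\lambda_1(B)}, \qquad 1 \leq k \leq n.
\end{equation*}
The key step is then to specialize the index so as to extract the extreme eigenvalues of the pair $(A,B)$: taking the lower bound at $k=n$ gives $\lambda_n(A,B) \geq \lambda_n(A)/\lambda_n(B)$, while taking the upper bound at $k=1$ gives $\lambda_1(A,B) \leq \lambda_1(A)/\lambda_1(B)$.

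Finally I would assemble the condition number. Since $A,B \in \mathcal{S}_n^+$, all eigenvalues involved are strictly positive, so the bound on $\lambda_1(A,B)$ may be inverted without reversing the inequality, and combining the two estimates yields
\begin{equation*}
    \kappa(A,B)=\frac{\lambda_n(A,B)}{\lambda_1(A,B)} \geq \frac{\lambda_n(A)/\lambda_n(B)}{\lambda_1(A)/\lambda_1(B)} = \frac{\lambda_n(A)}{\lambda_1(A)} \cdot \frac{\lambda_1(B)}{\lambda_n(B)}=\frac{\kappa(A)}{\kappa(B)},
\end{equation*}
which is the claim. The argument is short, and the only point demanding care — the ``main obstacle'', such as it is — is the bookkeeping: one must pair the lower bound with $k=n$ and the upper bound with $k=1$ (and not the reverse) so that $\lambda_n(B)$ and $\lambda_1(B)$ appear in the orientation that reconstructs $\kappa(B)$ in the denominator, and one must invoke positivity before inverting the bound on $\lambda_1(A,B)$.
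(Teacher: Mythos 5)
Your proposal is correct and follows exactly the paper's own argument: apply \Cref{th: eig_bounds}, inequalities \eqref{eq: ineq1}, with $C=I$ to obtain $\lambda_1(A,B)\leq\lambda_1(A)/\lambda_1(B)$ and $\lambda_n(A)/\lambda_n(B)\leq\lambda_n(A,B)$, then combine. The only difference is that you spell out the bookkeeping (the reciprocal eigenvalues of $(I,B)$ and the choice of index $k$) that the paper leaves implicit.
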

\begin{proof}
Applying Theorem \ref{th: eig_bounds}, inequalities \eqref{eq: ineq1} with $C=I$ yields
\begin{equation*}
    \lambda_1(A,B) \leq \frac{\lambda_1(A)}{\lambda_1(B)} \quad \text{and} \quad \frac{\lambda_n(A)}{\lambda_n(B)} \leq \lambda_n(A,B).
\end{equation*}
Combining the two previous inequalities concludes the proof.
\end{proof}

So far, all inequalities pertain to the global system matrices. Yet, the properties of finite element matrices are usually related to those of the element matrices from which they are formed. The two are connected through the assembly procedure, which we describe next. To simplify the presentation, we assume that the mesh is made up of a single type of element such that all element matrices have the same size $m$. Then, given a set of element matrices $\{A_e\}_{e=1}^N \subseteq \mathcal{S}_m$, where $N$ is the number of elements, the global matrix $A \in \mathbb{R}^{n \times n}$ can be expressed as 
\begin{equation}
\label{eq: assembly}
    A=\sum_{e=1}^N L_e^TA_eL_e,
\end{equation}
where $L_e^T = [\mathbf{e}_{i_1},\dots,\mathbf{e}_{i_m}] \in \mathbb{R}^{n \times m}$ contains a subset of the columns of the identity matrix $I_n$ with indices $i_k$ for $k=1,\dots,m$ depending on the connectivity of the element. Denoting $\mathsf{L}^T=[L_1^T, \dots, L_N^T] \in \mathbb{R}^{n \times mN}$ and $\mathsf{A}=\diag(A_1, \dots, A_N) \in \mathbb{R}^{mN \times mN}$, \cref{eq: assembly} is compactly written as $A=\mathsf{L}^T\mathsf{A}\mathsf{L}$. With this expression at hand, we now summarize some of the main results of \citep{fried1972bounds,wathen1987realistic}. The proofs are included and will be useful later in the analysis.

\begin{lemma}[{\cite[][Theorem 1]{fried1972bounds}}]
\label{lem: conditioning_FEM_matrices}
Given a finite element matrix $A=\mathsf{L}^T\mathsf{A}\mathsf{L}$. Then
\begin{align*}
    \max_e \lambda_m(A_e) &\leq \lambda_n(A) \leq p_{\max} \max_e \lambda_m(A_e), \\
    \min_e \lambda_1(A_e) &\leq \lambda_1(A),
\end{align*}
where $p_{\max} \in \mathbb{N}^*$ is the maximum number of elements to which a node is connected.
\end{lemma}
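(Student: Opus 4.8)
The plan is to work with the factored form $A=\mathsf{L}^T\mathsf{A}\mathsf{L}$ throughout and exploit the Rayleigh quotient characterization of the extreme eigenvalues, namely $\lambda_n(A)=\max_{\mathbf{x}\neq\mathbf{0}} \frac{\mathbf{x}^TA\mathbf{x}}{\mathbf{x}^T\mathbf{x}}$ and $\lambda_1(A)=\min_{\mathbf{x}\neq\mathbf{0}} \frac{\mathbf{x}^TA\mathbf{x}}{\mathbf{x}^T\mathbf{x}}$. For any $\mathbf{x}\in\mathbb{R}^n$, writing $\mathbf{y}=\mathsf{L}\mathbf{x}\in\mathbb{R}^{mN}$ with blocks $\mathbf{y}_e=L_e\mathbf{x}$, the key identity is $\mathbf{x}^TA\mathbf{x}=\mathbf{y}^T\mathsf{A}\mathbf{y}=\sum_{e=1}^N \mathbf{y}_e^T A_e \mathbf{y}_e$, which turns every global quadratic form into a sum of element quadratic forms. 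This is the engine behind all four bounds.

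First I would establish the upper bound on $\lambda_n(A)$. Bounding each term $\mathbf{y}_e^T A_e \mathbf{y}_e \leq \lambda_m(A_e)\,\mathbf{y}_e^T\mathbf{y}_e \leq (\max_e \lambda_m(A_e))\,\|\mathbf{y}_e\|^2$ and summing gives $\mathbf{x}^TA\mathbf{x} \leq (\max_e \lambda_m(A_e))\sum_e \|L_e\mathbf{x}\|^2$. The crucial combinatorial step is that $\sum_e \|L_e\mathbf{x}\|^2 = \mathbf{x}^T(\sum_e L_e^TL_e)\mathbf{x}$, and since each $L_e^TL_e$ is a diagonal $0/1$ matrix marking the degrees of freedom of element $e$, the matrix $\sum_e L_e^TL_e$ is diagonal with entries counting the number of elements to which each node belongs. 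This count is at most $p_{\max}$, so $\sum_e L_e^TL_e \preceq p_{\max} I_n$, whence $\mathbf{x}^TA\mathbf{x} \leq p_{\max}(\max_e \lambda_m(A_e))\|\mathbf{x}\|^2$ and the upper bound follows by maximizing the Rayleigh quotient.

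For the lower bound on $\lambda_n(A)$, I would argue the reverse direction by choosing a good test vector rather than bounding uniformly. Fix the element $e^\star$ achieving $\max_e \lambda_m(A_e)$ and let $\mathbf{w}\in\mathbb{R}^m$ be the corresponding top eigenvector of $A_{e^\star}$; set $\mathbf{x}=L_{e^\star}^T\mathbf{w}$, which embeds $\mathbf{w}$ into the global space supported exactly on element $e^\star$'s degrees of freedom. Since the $L_e$ pick out $0/1$ coordinate blocks, one has $L_{e^\star}\mathbf{x}=\mathbf{w}$ and $\|\mathbf{x}\|^2=\|\mathbf{w}\|^2$, so $\mathbf{x}^TA\mathbf{x}=\sum_e \mathbf{w}^T L_eL_{e^\star}^T A_e L_{e^\star}L_e^T\mathbf{w}\geq \mathbf{w}^TA_{e^\star}\mathbf{w}=\lambda_m(A_{e^\star})\|\mathbf{w}\|^2$, where the inequality uses that the omitted element terms are nonnegative because each $A_e\succeq 0$. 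This yields $\lambda_n(A)\geq \lambda_m(A_{e^\star})=\max_e\lambda_m(A_e)$. The same test-vector idea handles the lower bound on $\lambda_1(A)$: taking $\mathbf{x}=L_{e^\star}^T\mathbf{w}$ with $e^\star,\mathbf{w}$ now realizing $\min_e\lambda_1(A_e)$ and its bottom eigenvector gives $\mathbf{x}^TA\mathbf{x}\geq \lambda_1(A_{e^\star})\|\mathbf{x}\|^2$ by the same positivity argument, so $\lambda_1(A)\geq \min_e\lambda_1(A_e)$, matching the claim.

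The main obstacle is the bookkeeping around the localization operators $L_e$: one must carefully verify that $L_e L_{e^\star}^T$ either reproduces the relevant block or vanishes on the complementary nodes, so that the single-element test vector isolates the intended element contribution while all other contributions remain nonnegative. Nothing here is deep, but getting the identity $\sum_e L_e^TL_e=\diag(\text{nodal multiplicities})$ stated cleanly, and confirming $L_{e^\star}L_{e^\star}^T=I_m$, is where the argument must be precise; the positive semidefiniteness hypothesis $A_e\in\mathcal{S}_m$ is exactly what licenses discarding the other element terms in both lower bounds.
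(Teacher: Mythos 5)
Your arguments for the two bounds on $\lambda_n(A)$ are correct and essentially the same as the paper's: the uniform elementwise bound combined with the observation that $\sum_e L_e^TL_e$ is the diagonal matrix of nodal multiplicities bounded by $p_{\max}$ gives the upper bound, and the single-element test vector supported on the extremal element (discarding the remaining nonnegative contributions) gives the lower bound. The paper phrases the first step through the block-diagonal matrix $\mathsf{A}$ rather than term by term, but this is only a cosmetic difference.

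However, your argument for $\min_e\lambda_1(A_e)\leq\lambda_1(A)$ contains a genuine logical error. You exhibit one test vector $\mathbf{x}=L_{e^\star}^T\mathbf{w}$ and show $\mathbf{x}^TA\mathbf{x}\geq\lambda_1(A_{e^\star})\|\mathbf{x}\|^2$; but since $\lambda_1(A)=\min_{\mathbf{x}\neq\mathbf{0}}\mathbf{x}^TA\mathbf{x}/\|\mathbf{x}\|^2$, evaluating the Rayleigh quotient at a \emph{single} vector can only ever produce an \emph{upper} bound on $\lambda_1(A)$, never a lower one. A lower bound on the smallest eigenvalue must be verified for \emph{every} $\mathbf{x}$. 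The correct argument, which is the one in the paper, simply reuses your uniform machinery in the other direction: for arbitrary $\mathbf{x}$ with $\|\mathbf{x}\|_2=1$,
\begin{equation*}
\mathbf{x}^TA\mathbf{x}=\sum_{e=1}^N\mathbf{y}_e^TA_e\mathbf{y}_e\geq\Bigl(\min_e\lambda_1(A_e)\Bigr)\sum_{e=1}^N\|\mathbf{y}_e\|_2^2\geq\min_e\lambda_1(A_e),
\end{equation*}
where the last inequality uses $\sum_e\|\mathbf{y}_e\|_2^2=\sum_i p_i x_i^2\geq\|\mathbf{x}\|_2^2=1$ (every node belongs to at least one element, so $p_i\geq 1$) together with $\min_e\lambda_1(A_e)\geq 0$, which follows from the positive semidefiniteness of the $A_e$. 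All the ingredients are already present in your write-up; this one step is just applied in the wrong direction.
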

\begin{proof}
Given $\mathbf{x} \in \mathbb{R}^n$ with $\|\mathbf{x}\|_2=1$,
\begin{equation*}
    \lambda_{1}(\mathsf{A})\|\mathsf{L}\mathbf{x}\|_2^2 \leq \mathbf{x}^T A \mathbf{x} =\mathbf{x}^T\mathsf{L}^T\mathsf{A}\mathsf{L}\mathbf{x} \leq \lambda_{mN}(\mathsf{A})\|\mathsf{L}\mathbf{x}\|_2^2.
\end{equation*}
Since $\mathsf{A}$ is block-diagonal, $\lambda_{1}(\mathsf{A})=\min_e \lambda_1(A_e)$ and $\lambda_{mN}(\mathsf{A})=\max_e \lambda_m(A_e)$. Moreover, 
\begin{equation*}   \|\mathsf{L}\mathbf{x}\|_2^2=\mathbf{x}^T\mathsf{L}^T\mathsf{L}\mathbf{x}=\sum_{e=1}^N \mathbf{x}^TL_e^TL_e\mathbf{x}=\sum_{e=1}^N \|\mathbf{x}_e\|_2^2 = \sum_{i=1}^n p_i x_i^2
\end{equation*}
where $\mathbf{x}_e=L_e\mathbf{x}$ is a subvector of $\mathbf{x}$ and $p_i$ is the number of elements connected to node $i$. Since $1 \leq p_i \leq p_{\max}$ for all $i=1,\dots,n$ and $\|\mathbf{x}\|_2=1$, we immediately deduce that $1 \leq \|\mathsf{L}\mathbf{x}\|_2^2 \leq p_{\max}$. Combining these results, we obtain
\begin{equation*}
    \min_e \lambda_1(A_e) \leq \mathbf{x}^T A\mathbf{x} \leq p_{\max} \max_e \lambda_m(A_e),
\end{equation*}
which yields an upper bound on $\lambda_n(A)$ and a lower bound on $\lambda_1(A)$. The lower bound of $\lambda_n(A)$ is deduced for a specific choice of vector. Namely, we choose $\mathbf{x}$ to have zero components everywhere except for the subvector $\mathbf{x}_e$ corresponding to the element with largest eigenvalue, which we call $l$. Then, $\|\mathbf{x}\|_2=\|\mathbf{x}_l\|_2=1$ and by choosing $\mathbf{x}_l$ to be the eigenvector associated to the largest eigenvalue of $A_l$, we finally obtain
\begin{equation*}
    \lambda_n(A) \geq \mathbf{x}^T A \mathbf{x} = \sum_{e=1}^N \mathbf{x}_e^T A_e \mathbf{x}_e \geq \mathbf{x}_l^T A_l \mathbf{x}_l= \max_e \lambda_m(A_e),
\end{equation*}
where the second inequality follows from the positive semi-definiteness of $A_e$ for $e=1,\dots,N$.
\end{proof}
The previous lemma can be used to compute an upper bound on the condition number of the mass matrix as \citep[][Corollary 1]{fried1972bounds}
\begin{equation}
\label{eq: upper_cond_M}
1 \leq \kappa(M) \leq p_{\max} \frac{\max_e \lambda_m(M_e)}{\min_e \lambda_1(M_e)}.
\end{equation}
For immersed finite element discretizations, the condition number may become unbounded as $\lambda_1(M_e)$ goes to zero for arbitrarily small cut elements.

The next lemma, attributed to Irons and Treharne \cite{irons1971bound}, has already appeared numerous times in the finite element literature (see e.g. \citep{wathen1987realistic,hughes2012finite}). Its short and elegant proof, due to Wathen \cite{wathen1987realistic}, is included for completeness.

\begin{theorem}[{\cite{irons1971bound}}]
\label{th: bounds_generalized_eig}
Let $\{A_e\}_{e=1}^N \subseteq \mathcal{S}_m$ and $\{B_e\}_{e=1}^N \subseteq \mathcal{S}_m^+$ be sets of element matrices for global finite element matrices $A$ and $B$, respectively, and $N$ be the number of elements. Then
\begin{equation*}
    \min_e \lambda_1(A_e,B_e) \leq \lambda_1(A,B), \qquad \lambda_n(A,B) \leq \max_e \lambda_m(A_e,B_e).
\end{equation*}
\end{theorem}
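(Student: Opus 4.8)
The plan is to derive both inequalities simultaneously from the Rayleigh quotient (min--max) characterization of the extreme generalized eigenvalues, following Wathen's approach. First I would observe that the assembled matrix $B=\mathsf{L}^T\mathsf{B}\mathsf{L}$, with $\mathsf{B}=\diag(B_1,\dots,B_N)$, is positive definite, since each $B_e\succ0$ and $\mathsf{L}$ has full column rank (every degree of freedom belongs to at least one element, so $\mathsf{L}\mathbf{x}=\mathbf{0}$ forces $\mathbf{x}=\mathbf{0}$). By \cref{lem: classical_problem} the pair $(A,B)$ then has real eigenvalues, and simultaneous diagonalization gives the variational characterization
\[
\lambda_1(A,B)=\min_{\mathbf{x}\neq\mathbf{0}}\frac{\mathbf{x}^TA\mathbf{x}}{\mathbf{x}^TB\mathbf{x}},\qquad \lambda_n(A,B)=\max_{\mathbf{x}\neq\mathbf{0}}\frac{\mathbf{x}^TA\mathbf{x}}{\mathbf{x}^TB\mathbf{x}}.
\]
Setting $\mathbf{x}_e=L_e\mathbf{x}$, the assembly formula \eqref{eq: assembly} turns both quadratic forms into sums over elements, $\mathbf{x}^TA\mathbf{x}=\sum_e\mathbf{x}_e^TA_e\mathbf{x}_e$ and $\mathbf{x}^TB\mathbf{x}=\sum_e\mathbf{x}_e^TB_e\mathbf{x}_e$, reducing the claim to a statement about a ratio of sums.

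The key step would be an elementwise (mediant) inequality. Applying the same Rayleigh quotient bound to each small pair $(A_e,B_e)$, I would write, for every $\mathbf{x}_e$,
\[
\lambda_1(A_e,B_e)\,\mathbf{x}_e^TB_e\mathbf{x}_e\le\mathbf{x}_e^TA_e\mathbf{x}_e\le\lambda_m(A_e,B_e)\,\mathbf{x}_e^TB_e\mathbf{x}_e,
\]
replace $\lambda_1(A_e,B_e)$ by $\min_e\lambda_1(A_e,B_e)$ and $\lambda_m(A_e,B_e)$ by $\max_e\lambda_m(A_e,B_e)$, and sum over $e$ to obtain
\[
\Big(\min_e\lambda_1(A_e,B_e)\Big)\,\mathbf{x}^TB\mathbf{x}\le\mathbf{x}^TA\mathbf{x}\le\Big(\max_e\lambda_m(A_e,B_e)\Big)\,\mathbf{x}^TB\mathbf{x}.
\]
Dividing by $\mathbf{x}^TB\mathbf{x}>0$ and taking the minimum (respectively maximum) over $\mathbf{x}\neq\mathbf{0}$ would yield the lower bound on $\lambda_1(A,B)$ and the upper bound on $\lambda_n(A,B)$ in one stroke.

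I do not expect a genuine obstacle here; the care is entirely in the bookkeeping. The two points I would be most careful about are: (i) establishing positive definiteness of the global $B$ so that the Rayleigh quotient has a strictly positive denominator and the min--max principle actually applies; and (ii) phrasing the elementwise inequality in cleared-denominator form so that it remains valid in the degenerate case $\mathbf{x}_e=\mathbf{0}$ (where both sides simply vanish), since the elementwise ratio itself is undefined there. With those two points handled, summation and a single division close both inequalities at once.
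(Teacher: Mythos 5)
Your proposal is correct and follows essentially the same route as the paper: both rest on the Rayleigh quotient characterization of the extreme generalized eigenvalues together with the assembly decomposition $\mathbf{x}^TA\mathbf{x}=\sum_e\mathbf{x}_e^TA_e\mathbf{x}_e$. The only (cosmetic) difference is the final step, where the paper enlarges the admissible set from $\mathcal{Y}=\operatorname{range}(\mathsf{L})$ to all of $\mathbb{R}^{mN}$ and reads off the extreme eigenvalues of the block-diagonal pair, while you reach the same bound by summing the cleared-denominator elementwise inequalities.
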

\begin{proof}
\begin{equation*}
    \lambda_1(A,B)=\min_{\substack{ \mathbf{x} \in \mathbb{R}^n \\ \mathbf{x} \neq \mathbf{0}}} \frac{\mathbf{x}^T \mathsf{L}^T\mathsf{A}\mathsf{L}\mathbf{x}}{\mathbf{x}^T \mathsf{L}^T\mathsf{B}\mathsf{L} \mathbf{x}}=\min_{\substack{ \mathbf{y} \in \mathcal{Y} \\ \mathbf{y} \neq \mathbf{0}}} \frac{\mathbf{y}^T \mathsf{A} \mathbf{y}}{\mathbf{y}^T \mathsf{B} \mathbf{y}} \geq \min_{\substack{ \mathbf{y} \in \mathbb{R}^{mN} \\ \mathbf{y} \neq \mathbf{0}}} \frac{\mathbf{y}^T \mathsf{A} \mathbf{y}}{\mathbf{y}^T \mathsf{B} \mathbf{y}}=\min_e \lambda_1(A_e,B_e).
\end{equation*}
where $\mathcal{Y} \subseteq \mathbb{R}^{mN}$ is the space spanned by the columns of $\mathsf{L}$. Similarly,
\begin{equation*}
    \lambda_n(A,B)=\max_{\substack{\mathbf{x} \in \mathbb{R}^n \\ \mathbf{x} \neq \mathbf{0}}} \frac{\mathbf{x}^T \mathsf{L}^T\mathsf{A}\mathsf{L}\mathbf{x}}{\mathbf{x}^T \mathsf{L}^T\mathsf{B}\mathsf{L} \mathbf{x}}=\max_{\substack{ \mathbf{y} \in \mathcal{Y} \\ \mathbf{y} \neq \mathbf{0}}} \frac{\mathbf{y}^T \mathsf{A} \mathbf{y}}{\mathbf{y}^T \mathsf{B} \mathbf{y}} \leq \max_{\substack{ \mathbf{y} \in \mathbb{R}^{mN} \\ \mathbf{y} \neq \mathbf{0}}} \frac{\mathbf{y}^T \mathsf{A} \mathbf{y}}{\mathbf{y}^T \mathsf{B} \mathbf{y}}=\max_e \lambda_m(A_e,B_e).
\end{equation*}
\end{proof}

The upper bound of \Cref{th: bounds_generalized_eig} is usually tight \cite{cottereau2018stability} and translates into a relatively large and conservative step size estimate for applications in explicit dynamics. However, neither \Cref{th: bounds_generalized_eig} nor \Cref{lem: conditioning_FEM_matrices} are applicable for bounding the eigenvalues of $CK$, where $C$ is a reciprocal mass matrix (i.e. a sparse approximate inverse of the mass matrix) \citep{tkachuk2015direct,schaeuble2017variationally}. Indeed, $CK$ does not have the form described in \cref{eq: assembly} and is generally not even symmetric. Instead, upper bounds based on the classical Gershgorin circle theorem \cite{varga2011gersgorin} or its generalization given by Ostrowski’s bound \cite{tkachuk2019time} have been suggested. In \cite{cocchetti2013selective}, the authors have noted that the former produced large overestimates for the element largest eigenfrequency when using $M^{-1}K$. This is not surprising given that pre-multiplication of $K$ with the diagonal matrix $M^{-1}$ will only rescale its rows. Substituting $M^{-1}K$ with the similar (and symmetric) matrix $M^{-1/2}KM^{-1/2}$ sometimes produces sharper bounds \cite{schaeuble2018time}. Combining the Gershgorin theorem with a diagonal similarity transformation might also improve the bounds \cite[][Corollary 6.1.6]{horn2012matrix}, particularly for beam, plate or shell problems, when the matrix entries have different physical units \cite{tkachuk2019time}.

In contrast, several authors in the engineering community have undertaken the task of deriving analytical or heuristic estimates for the largest element frequencies (see e.g. \cite{cocchetti2013selective,cocchetti2015selective,hartmann2015mass,adam2015stable}). However, these estimates are not necessarily conservative and rarely account for common changes to element formulations due, for instance, to reduced integration, locking-free formulations or the addition of penalty terms \citep{leidinger2019explicit,stoter2023critical}. Nevertheless, they may help identify critical elements \cite{hartmann2015mass}, before using refined estimates.

As a matter of fact, the sharpness of the upper bound of \Cref{th: bounds_generalized_eig} also highlights how individual elements might negatively impact the critical time step and is an incentive for modifying the element formulations prior to assembly. This reasoning drove early developments of local mass scaling techniques \cite{olovsson2004selective}. We are now ready to analyze some of the practical schemes proposed in the literature.

\subsubsection{Conventional mass scaling}
Conventional mass scaling only scales the diagonal entries of local (lumped) mass matrices. Without loss of generality, we may assume only the first $r$ entries are scaled by a factor $\alpha>1$ such that if $M_e=\diag(d_1,\dots,d_m)$, then $\overline{M}_e = \diag(\alpha d_1, \dots ,\alpha d_r, d_{r+1},\dots,d_m)$. In this case, the following corollary holds.

\begin{corollary}
\label{cor: CMS}
For conventional mass scaling with a uniform scaling parameter $\alpha>1$ the global eigenfrequencies $\overline{\omega}_i$ and $\omega_i$ of the scaled and unscaled system, respectively, satisfy
\begin{equation*}
    1 \leq \frac{\omega_i}{\overline{\omega}_i} \leq \sqrt{\alpha}.
\end{equation*}
\end{corollary}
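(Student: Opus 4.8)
The plan is to reduce the frequency statement to the eigenvalue perturbation inequality \eqref{eq: eig_pert_bounds_mass}, which already holds for the global matrices. Since eigenfrequencies are defined as $\omega_i = \sqrt{\lambda_i(K,M)}$ and $\overline{\omega}_i = \sqrt{\lambda_i(K,\overline{M})}$, squaring the desired bound shows it is equivalent to $1 \le \lambda_i(K,M)/\lambda_i(K,\overline{M}) \le \alpha$ for every $i$. Inequality \eqref{eq: eig_pert_bounds_mass} (instantiated with $A=K$, $B=M$, $C=\overline{M}$) already sandwiches this ratio between $\lambda_1(\overline{M},M)$ and $\lambda_n(\overline{M},M)$, so the whole corollary collapses to the two scalar estimates $\lambda_1(\overline{M},M) \ge 1$ and $\lambda_n(\overline{M},M) \le \alpha$.

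First I would establish the two-sided Loewner ordering $M \preceq \overline{M} \preceq \alpha M$ (equivalently $\overline{M} \succeq M$ and $\alpha M \succeq \overline{M}$), working at the element level and then assembling. For a single element, $\overline{M}_e - M_e = (\alpha-1)\diag(d_1,\dots,d_r,0,\dots,0) \succeq 0$ and $\alpha M_e - \overline{M}_e = (\alpha-1)\diag(0,\dots,0,d_{r+1},\dots,d_m) \succeq 0$, both because $\alpha > 1$ and the lumped masses $d_j$ are nonnegative. Assembly preserves positive semidefiniteness: writing a global perturbation as $\sum_e L_e^T S_e L_e$ in the notation of \cref{eq: assembly}, one has $\mathbf{x}^T(\sum_e L_e^T S_e L_e)\mathbf{x} = \sum_e (L_e\mathbf{x})^T S_e (L_e\mathbf{x}) \ge 0$ whenever each $S_e \succeq 0$. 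Applying this to $S_e = \overline{M}_e - M_e$ and to $S_e = \alpha M_e - \overline{M}_e$ yields the global ordering.

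It then remains to convert the Loewner ordering into the scalar bounds via Rayleigh quotients: from $\overline{M} \succeq M$ one gets $\lambda_1(\overline{M},M) = \min_{\mathbf{x} \neq \mathbf{0}} (\mathbf{x}^T\overline{M}\mathbf{x})/(\mathbf{x}^TM\mathbf{x}) \ge 1$, and from $\alpha M \succeq \overline{M}$ one gets $\lambda_n(\overline{M},M) = \max_{\mathbf{x} \neq \mathbf{0}} (\mathbf{x}^T\overline{M}\mathbf{x})/(\mathbf{x}^TM\mathbf{x}) \le \alpha$; the same conclusions follow directly from \cref{lem: equivalence_conditions} applied to the pairs $(\overline{M},M)$ and $(\overline{M},\alpha M)$. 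Substituting into \eqref{eq: eig_pert_bounds_mass} and taking square roots gives the claim.

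The main obstacle is not any single difficult step but rather the care needed in the local-to-global passage: one must verify that the element-wise Loewner inequalities survive assembly, which is precisely the positive-semidefiniteness argument above. A reassuring side remark is that the bounds are in fact sharp, since the lumped $M$ and $\overline{M}$ are diagonal and commute, so the spectrum of $(\overline{M},M)$ consists exactly of the per-degree-of-freedom ratios $\overline{m}_i/m_i \in \{1,\alpha\}$; hence $\lambda_1(\overline{M},M)=1$ and $\lambda_n(\overline{M},M)=\alpha$ are attained whenever both scaled and unscaled degrees of freedom are present, and the constants cannot be improved.
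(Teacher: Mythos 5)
Your proof is correct, and its first half coincides with the paper's: both reduce the claim via \eqref{eq: eig_pert_bounds_mass} to the two scalar estimates $\lambda_1(\overline{M},M)\geq 1$ and $\lambda_n(\overline{M},M)\leq\alpha$. Where you diverge is in the local-to-global passage. The paper invokes the Irons--Treharne bound (\cref{th: bounds_generalized_eig}), which sandwiches the extreme eigenvalues of the global pair $(\overline{M},M)$ by the extreme eigenvalues of the element pairs $(\overline{M}_e,M_e)$, and then reads off that the latter are exactly $1$ and $\alpha$ for diagonal element matrices. You instead prove the two-sided Loewner ordering $M \preceq \overline{M} \preceq \alpha M$ by assembling elementwise positive semidefinite differences and convert it into the scalar bounds with Rayleigh quotients. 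The two mechanisms are essentially equivalent here --- your argument is a hands-on special case of the Rayleigh-quotient proof of \cref{th: bounds_generalized_eig}, since $\overline{M}_e \succeq \lambda_1(\overline{M}_e,M_e)\,M_e$ recovers the same conclusion --- but the paper's template carries over verbatim to \cref{cor: local_deflation_st1,cor: bounds_olovsson,cor: bounds_hoffmann}, where the element pair eigenvalues are no longer plain diagonal ratios, whereas your Loewner route would need that extra step there. One small caveat on your closing sharpness remark: for the assembled global matrices the per-degree-of-freedom ratios $\overline{m}_i/m_i$ lie in $[1,\alpha]$ but need not all belong to $\{1,\alpha\}$, because a node shared between a scaled and an unscaled element receives mixed contributions; the extremes are attained only if some node is scaled (respectively unscaled) in every element containing it. This does not affect the validity of the bounds themselves.
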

\begin{proof}
Combining \Cref{th: eig_bounds,th: bounds_generalized_eig},
\begin{equation*}
     \min_e \lambda_1(\overline{M}_e, M_e) \leq \lambda_1(\overline{M},M) \leq \frac{\lambda_i(K,M)}{\lambda_i(K, \overline{M})} \leq \lambda_n(\overline{M},M) \leq \max_e \lambda_m(\overline{M}_e, M_e).
\end{equation*}    
Since only selected entries of the element lumped mass matrices $M_e$ are scaled by $\alpha>1$, $\lambda_1(\overline{M}_e, M_e)=1$ and $\lambda_m(\overline{M}_e, M_e)=\alpha$. The result then immediately follows after taking the square root.
\end{proof}

\Cref{cor: CMS} neither depends on the nature of the scaled degrees of freedom nor on the original physical model. Thus, it applies both to rotational mass scaling \cite{key1971transient,hughes1978reduced} and to the more recent intrinsically selective mass scaling technique (ISMS) \cite{oesterle2022intrinsically,krauss2024intrinsically}. While the sharpness of the bounds certainly depends on these factors, the numerical experiments reported in \cite{krauss2024intrinsically} for ISMS reveal that the upper bound is tight if $\alpha$ is not too large.

\subsubsection{Local deflation}
\label{se: local_deflation}
Several authors \cite{ye2017selective,gonzalez2020large} have suggested applying locally the eigenvalue deflation techniques reviewed in \Cref{se: global_deflation}. Following the construction mentioned in \cite{gonzalez2018inverse,gonzalez2020large}, all local mass matrices $\{M_e\}_{e=1}^N$ are scaled according to \Cref{def: deflated_pencil} with $f(\lambda)=0$ and $g(\lambda)=\alpha$ and their contributions are assembled in the usual manner. For simplicity, we assume a uniform rank $0<r<m$ and cutoff value $\alpha>0$ for all elements. For this construction, the following corollary holds.

\begin{corollary}
\label{cor: local_deflation_st1}
If all element matrices are scaled following \Cref{def: deflated_pencil} with $f(\lambda)=0$ and $g(\lambda)=\alpha$ for a uniform cutoff value of $\alpha$, then, the global eigenfrequencies $\overline{\omega}_i$ and $\omega_i$ of the scaled and unscaled system, respectively, satisfy
\begin{equation*}
    1 \leq \frac{\omega_i}{\overline{\omega}_i} \leq \sqrt{1+\alpha}.
\end{equation*}
\end{corollary}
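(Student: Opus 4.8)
The plan is to mirror the proof of \cref{cor: CMS}, since the global chain of inequalities is identical and the only genuinely new ingredient is the evaluation of the two extreme element-level generalized eigenvalues $\lambda_1(\overline{M}_e, M_e)$ and $\lambda_m(\overline{M}_e, M_e)$ for this deflation-based scaling. First I would apply \cref{th: eig_bounds}, inequalities \eqref{eq: ineq1}, with $A=K$, $B=M$, $C=\overline{M}$, exactly as in \eqref{eq: eig_pert_bounds_mass}, and then sandwich $\lambda_1(\overline{M},M)$ and $\lambda_n(\overline{M},M)$ between element extrema via \cref{th: bounds_generalized_eig}, yielding
\begin{equation*}
\min_e \lambda_1(\overline{M}_e, M_e) \leq \lambda_1(\overline{M},M) \leq \frac{\lambda_i(K,M)}{\lambda_i(K,\overline{M})} \leq \lambda_n(\overline{M},M) \leq \max_e \lambda_m(\overline{M}_e, M_e).
\end{equation*}

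The crux is therefore computing the two extreme element eigenvalues. By \cref{def: deflated_pencil} with $f(\lambda)=0$ and $g(\lambda)=\alpha$, the stiffness is left untouched ($f \equiv 0$) and the scaled element mass is $\overline{M}_e = M_e + \alpha V_e V_e^T$ with $V_e = M_e U_{2,e}$, where $U_{2,e}$ collects the last $r$ $M_e$-orthonormal eigenvectors of $(K_e, M_e)$. I would diagonalize the pair $(\overline{M}_e, M_e)$ directly in the full $M_e$-orthonormal eigenbasis $U_e = [U_{1,e}, U_{2,e}]$. Multiplying the generalized eigenvalue equation $\overline{M}_e \mathbf{u} = \mu M_e \mathbf{u}$ by $M_e^{-1}$ gives $(\mu-1)\mathbf{u} = \alpha U_{2,e} U_{2,e}^T M_e \mathbf{u}$, and I would then test $\mathbf{u}$ against the columns of $U_{1,e}$ and of $U_{2,e}$. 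The $M_e$-orthonormality relation $U_{2,e}^T M_e U_e = [\,0 \;\; I_r\,]$ immediately produces $\mu = 1$ with multiplicity $m-r$ (columns of $U_{1,e}$, which are annihilated by $U_{2,e}^T M_e$) and $\mu = 1+\alpha$ with multiplicity $r$ (columns of $U_{2,e}$, which are fixed points of $U_{2,e} U_{2,e}^T M_e$). Hence $\lambda_1(\overline{M}_e, M_e) = 1$ and $\lambda_m(\overline{M}_e, M_e) = 1+\alpha$ for every element.

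Finally, since these values are uniform across elements, the outer bounds collapse to $1 \leq \lambda_i(K,M)/\lambda_i(K,\overline{M}) \leq 1+\alpha$; taking square roots and using $\lambda_i(K,M)/\lambda_i(K,\overline{M}) = \omega_i^2/\overline{\omega}_i^2$ delivers the claim, which is precisely \cref{cor: CMS} with $\alpha$ replaced by $1+\alpha$ in the upper bound. The only potential obstacle is the clean eigen-decomposition of $(\overline{M}_e, M_e)$, namely verifying that the low-rank perturbation $\alpha V_e V_e^T$ acts as a pure shift $1 \mapsto 1+\alpha$ on exactly the $r$-dimensional deflation subspace while fixing its $M_e$-orthogonal complement; this relies solely on the $M_e$-orthonormality of the chosen eigenvectors and requires no spectral information about $K_e$. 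As a shortcut, the lower bound $\lambda_1 \geq 1$ also follows directly from $E_e = \alpha V_e V_e^T \succeq 0$ via \cref{lem: equivalence_conditions}, leaving only the top eigenvalue to be computed explicitly.
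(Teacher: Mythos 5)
Your argument is correct and follows the paper's proof essentially verbatim: the same chain of inequalities as in \cref{cor: CMS} (combining \cref{th: eig_bounds} and \cref{th: bounds_generalized_eig}) reduces everything to the element eigenvalues of $(\overline{M}_e,M_e)$, which are $1$ with multiplicity $m-r$ and $1+\alpha$ with multiplicity $r$. The only cosmetic difference is that the paper obtains these by applying \cref{th: low_rank_pert_AB} locally with $A=B=M_e$, $f(\lambda)=\alpha$ and $g(\lambda)=0$, whereas you re-derive that special case by hand from the $M_e$-orthonormality of the eigenvectors.
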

\begin{proof}
The proof strategy is exactly the same as in \Cref{cor: CMS} and only requires computing the eigenvalues of $(\overline{M}_e, M_e)$. Applying \Cref{th: low_rank_pert_AB} locally (with $A=B=M_e$, $f(\lambda)=\alpha$ and $g(\lambda)=0$), we deduce that the eigenvalues of $(\overline{M}_e, M_e)$ are
\begin{equation*}
    \lambda_k(\overline{M}_e, M_e)=
    \begin{cases}
    1 & \text{ for } k=1,\dots,m-r, \\
    1+\alpha & \text{ for } k=m-r+1,\dots,m.
    \end{cases}
\end{equation*}
The result then immediately follows after taking the square root.
\end{proof}

\Cref{cor: CMS,cor: local_deflation_st1} are strikingly similar and may achieve a similar increase of the critical time step. However, local deflation strategies may deliver far greater accuracy by specifically targeting restrictive modes that contribute very little to the solution. Selecting the right modes is strongly problem-dependent. For instance, for thin-walled structures, they are usually thickness stretch modes \cite{cocchetti2013selective,cocchetti2015selective,hoffmann2023finite} while for nearly incompressible materials, they are volumetric modes \cite{ye2017selective}. However, if $\alpha$ becomes too large, other deformation modes may limit the critical time step and, as shown experimentally in \cite{gonzalez2018inverse,gonzalez2020large}, the method then loses significant accuracy by introducing high-frequency modes in the low-frequency spectrum. This phenomenon is less likely if the eigenvalue numbering is preserved. Thus, the authors in \cite{tkachuk2014local,gonzalez2020large} directly applied a local counterpart of the global deflation technique (with the same functions $f$ and $g$ as globally) and a uniform rank value $r$. A similar technique was applied patchwise in \cite{voet2024robust} for isogeometric discretizations. \Cref{cor: local_deflation_st2} below provides the bounds in this case.

\begin{corollary}
\label{cor: local_deflation_st2}
If all element matrices are scaled following \Cref{def: deflated_pencil} with the functions $f$ and $g$ defined in \cref{eq: mass_scaling}, then, the global eigenfrequencies $\overline{\omega}_i$ and $\omega_i$ of the scaled and unscaled system, respectively, satisfy
\begin{equation*}
    1 \leq \frac{\omega_i}{\overline{\omega}_i} \leq \max_e \frac{\omega_{m,e}}{\omega_{m-r,e}},
\end{equation*}
where $\omega_{j,e}=\sqrt{\lambda_j(K_e,M_e)}$ are the eigenfrequencies of $(K_e,M_e)$.
\end{corollary}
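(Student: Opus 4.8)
The plan is to follow verbatim the proof strategy of \cref{cor: CMS,cor: local_deflation_st1}: I would combine the global sandwich \cref{eq: eig_pert_bounds_mass} (itself a consequence of \cref{th: eig_bounds}) with the element-wise bounds of \cref{th: bounds_generalized_eig} applied to the pair $(\overline{M},M)$, producing the chain
\begin{equation*}
    \min_e \lambda_1(\overline{M}_e, M_e) \leq \lambda_1(\overline{M},M) \leq \frac{\lambda_i(K,M)}{\lambda_i(K,\overline{M})} \leq \lambda_n(\overline{M},M) \leq \max_e \lambda_m(\overline{M}_e, M_e).
\end{equation*}
Since the local scaling in \cref{eq: mass_scaling} uses $f\equiv 0$, the stiffness is untouched ($\overline{K}_e=K_e$, hence $\overline{K}=K$), so the central ratio is exactly $\omega_i^2/\overline{\omega}_i^2$. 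Everything then reduces, as before, to identifying the extreme eigenvalues of the element pair $(\overline{M}_e,M_e)$.

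The only genuine work is thus computing $\Lambda(\overline{M}_e,M_e)$, and here lies the main obstacle: unlike in \cref{cor: local_deflation_st1}, the function $g$ in \cref{eq: mass_scaling} is no longer constant, so I cannot simply reuse the $A=B=M_e$ shortcut (which relied on $f(D_2)$ being a multiple of the identity). Instead I would exploit the eigenvector-preserving nature of the construction. By \cref{th: low_rank_pert_AB} applied locally to $(K_e,M_e)$, the scaled pair $(\overline{K}_e,\overline{M}_e)=(K_e,\overline{M}_e)$ retains the $M_e$-orthonormal eigenvectors $\mathbf{u}_1,\dots,\mathbf{u}_m$ of $(K_e,M_e)$, with
\begin{equation*}
    \lambda_k(K_e,\overline{M}_e)=
    \begin{cases}
    \lambda_k(K_e,M_e) & k=1,\dots,m-r,\\
    \lambda_{m-r}(K_e,M_e) & k=m-r+1,\dots,m.
    \end{cases}
\end{equation*}

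Equating the two relations $K_e\mathbf{u}_k=\lambda_k(K_e,M_e)\,M_e\mathbf{u}_k$ and $K_e\mathbf{u}_k=\lambda_k(K_e,\overline{M}_e)\,\overline{M}_e\mathbf{u}_k$ (the elementary manipulation underlying \cref{lem: common_eigenvector}) gives $\overline{M}_e\mathbf{u}_k=\tfrac{\lambda_k(K_e,M_e)}{\lambda_k(K_e,\overline{M}_e)}M_e\mathbf{u}_k$, so each $\mathbf{u}_k$ is an eigenvector of $(\overline{M}_e,M_e)$ and, since the $\mathbf{u}_k$ form a basis, these exhaust the spectrum:
\begin{equation*}
    \lambda_k(\overline{M}_e,M_e)=\frac{\lambda_k(K_e,M_e)}{\lambda_k(K_e,\overline{M}_e)}=
    \begin{cases}
    1 & k=1,\dots,m-r,\\
    \dfrac{\lambda_k(K_e,M_e)}{\lambda_{m-r}(K_e,M_e)} & k=m-r+1,\dots,m.
    \end{cases}
\end{equation*}

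Finally, since these ratios are nondecreasing and bounded below by $1$ (because $\lambda_k(K_e,M_e)\geq\lambda_{m-r}(K_e,M_e)$ for $k\geq m-r$), the extremes are $\lambda_1(\overline{M}_e,M_e)=1$ and $\lambda_m(\overline{M}_e,M_e)=\lambda_m(K_e,M_e)/\lambda_{m-r}(K_e,M_e)=\omega_{m,e}^2/\omega_{m-r,e}^2$. I would then substitute $\min_e\lambda_1(\overline{M}_e,M_e)=1$ and $\max_e\lambda_m(\overline{M}_e,M_e)=\max_e \omega_{m,e}^2/\omega_{m-r,e}^2$ into the chain above and take square roots to conclude. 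The delicate point throughout is resisting the constant-$g$ shortcut of \cref{cor: local_deflation_st1} and instead routing $\Lambda(\overline{M}_e,M_e)$ through the preserved eigenvectors, which is precisely what makes the local cutoff $\lambda_{m-r,e}$ surface correctly in the bound.
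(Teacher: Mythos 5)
Your proof is correct and follows essentially the same route as the paper's (whose proof is just a one-line instruction to adapt \cref{cor: local_deflation_st1}): the identical chain of inequalities from \cref{th: eig_bounds,th: bounds_generalized_eig} reduces everything to computing $\Lambda(\overline{M}_e,M_e)$, and your shared-eigenvector computation correctly yields $1$ on the first $m-r$ modes and $\lambda_k(K_e,M_e)/\lambda_{m-r}(K_e,M_e)$ on the last $r$, giving the stated extremes. One cosmetic caveat: for rigid-body modes $\lambda_k(K_e,M_e)=0$ the quotient $\lambda_k(K_e,M_e)/\lambda_k(K_e,\overline{M}_e)$ is $0/0$, so for $k\le m-r$ it is cleaner to observe directly that $V_e^T\mathbf{u}_k=U_{e,2}^TM_e\mathbf{u}_k=\mathbf{0}$ by $M_e$-orthonormality, whence $\overline{M}_e\mathbf{u}_k=M_e\mathbf{u}_k$; the extreme eigenvalues, and hence the bound, are unaffected.
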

\begin{proof}
The proof merely requires adapting the choices of $f$ and $g$ in \Cref{cor: local_deflation_st1}.     
\end{proof}

\begin{remark}
If the central difference method is used, \Cref{cor: local_deflation_st2} may be formulated as
\begin{equation*}
    1 \leq \frac{\overline{\Delta t}_c}{\Delta t_c} \leq \max_e \frac{\overline{\Delta t}_{c,e}}{\Delta t_{c,e}}
\end{equation*}
where $\overline{\Delta t}_{c,e}$ and $\Delta t_{c,e}$ are the critical time steps for the locally scaled and unscaled systems, respectively.
\end{remark}

The benefits of local deflation techniques are not always clear. On the one hand, their setup is cheaper than their global counterpart. On the other hand, they might effect the lower frequencies and be less effective at removing the larger ones. Moreover, the Woodbury identity no longer applies to locally deflated systems and solving linear systems with the locally scaled mass matrix is not straightforward. This was the main reason in \cite{voet2024robust} for locally scaling the stiffness matrices instead.

Element modifications have also been suggested for improving the conditioning of system matrices and the stability of immersed methods. In \cite{eisentrager2024eigenvalue}, the element mass (and stiffness) matrices of cut elements are modified by stabilizing near zero eigenvalues. Those techniques simply consist in locally modifying the spectrum of element matrices by adding a stabilization parameter to near zero eigenvalues. Assuming that the $r$ smallest eigenvalues of the mass matrix for a cut element are stabilized with a parameter $\epsilon$ and
\begin{equation*}
    \lambda_1(M_1)+\epsilon \leq \dots \leq \lambda_r(M_e) + \epsilon \leq \lambda_{r+1}(M_e) \leq \dots \leq \lambda_m(M_e),
\end{equation*}
then, if one reasonably assumes that $\min_e \lambda_1(\overline{M}_e)$ is attained for a cut element, the improvement of the condition number of the assembled mass matrix is readily appreciated from \cref{eq: upper_cond_M}:
\begin{equation*}
    \kappa(\overline{M}) \leq \frac{p_{\max}}{\epsilon} \max_e \lambda_m(M_e).
\end{equation*}
 
Unfortunately, local deflation strategies require explicit knowledge of the mode shapes, which is not convenient when the stiffness matrix changes during the course of the simulation. Some alternatives are analyzed next.

\subsubsection{Method of [Olovsson et al., 2005]}
\label{se: olovsson}
In the early 2000s, Olovsson et al. \cite{olovsson2005selective} proposed an ad hoc local mass scaling strategy specifically designed for linear hexahedral elements (and later generalized to other elements). For problems in elasticity, the element lumped mass and scaling matrices are defined as
\begin{equation*}
    M_e=I_3 \otimes \mathrm{M}_e
    \quad \text{and} \quad E_e=I_3 \otimes \mathrm{E}_e,
\end{equation*}
where $\mathrm{M}_e=\frac{m_e}{8}I_8$ with $m_e$ the mass of element $e$ and
\begin{equation*}
    \mathrm{E}_e=\frac{\beta m_e}{56}
    \begin{pmatrix}
        7 & -1 & \cdots & -1 \\
        -1 & 7 & & \vdots\\
        \vdots & & \ddots & \\
        -1 & \cdots & & 7
    \end{pmatrix}
\end{equation*}
where $\beta \geq 0$ is a scaling parameter. In principle $\beta$ may depend on the element but we will not consider such cases here. We immediately notice that $\mathrm{E}_e$ is symmetric positive semidefinite with zero row-sum. Indeed, by the Gershgorin circle theorem \cite[][Theorem 1.1]{varga2011gersgorin}, its (real) eigenvalues lie in the interval $[0, \ \frac{\beta m_e}{4}]$. The positive semi-definiteness carries over to the assembled matrix $E$ and \Cref{lem: equivalence_conditions} guarantees a decrease of the generalized eigenvalues of the scaled matrix pair. Note that $\mathrm{E}_e$ may be expressed as\footnote{Later authors \citep{gavoille2013enrichissement,borrvall2011selective} have defined instead $\mathrm{E}_e = \frac{\beta m_e}{8}(I_8-\mathbf{u}\mathbf{u}^T)$ but we will stick to the original formulation, as proposed in \cite{olovsson2005selective}, where $\beta=1$ doubles the diagonal entries of $M_e$. This small subtlety merely changes a constant.}
\begin{equation}
\label{eq: projector}
    \mathrm{E}_e=\frac{\beta m_e}{7}(I_8-\mathbf{u}\mathbf{u}^T)
\end{equation}
where $\mathbf{u}=\frac{\mathbf{e}}{\|\mathbf{e}\|_2}$ is the normalized vector of all ones. The analogy with local deflation techniques is now evident since \cref{eq: projector} indicates that, up to a multiplicative constant, $\mathrm{E}_e$ is the orthogonal projector into the orthogonal complement of rigid body translations $\Span \{\mathbf{u}\}^\perp$ \cite{gavoille2013enrichissement} and $\mathbf{u} \in \ker(\mathrm{E}_e)$. In the engineering community, this last property is known as the conservation of linear momentum (or translational inertia) \cite{tkachuk2015direct}. Since $M_e$ is a scaled identity matrix (i.e. a scalar matrix), the generalized eigenvectors of $(K_e,M_e)$ are merely eigenvectors of $K_e$. Moreover, since $K_e$ is symmetric, its eigenvectors are orthogonal and based on \cref{eq: CFL_central_difference}, one may easily show that
\begin{equation*}
    \frac{\overline{\Delta t}_{c,e}}{\Delta t_{c,e}} = \sqrt{1+\frac{8}{7}\beta}.
\end{equation*}
In fact, as we will prove below, the right-hand side is also an upper bound on $\frac{\overline{\Delta t}_c}{\Delta t_c}$ and favorably compares with the $\sqrt{1+\beta}$ factor increase reported by Olovsson et al. \cite{olovsson2006iterative}\footnote{The $\sqrt{1+\beta}$ factor increase holds for alternative formulations presented in \citep{gavoille2013enrichissement,borrvall2011selective}. Note the striking similarity with \Cref{cor: local_deflation_st1}.}. Unfortunately, the authors also reported an increase of the condition number of $\overline{M}$ by roughly a factor $1+2\beta$. We will now prove these observations and even refine them. 

\begin{corollary}[Mass scaling of {[Olovsson et al., 2005]}]
\label{cor: bounds_olovsson}
For the mass scaling method of Olovsson et al. \cite{olovsson2005selective}, the following inequalities hold:
\begin{equation*}
    1 \leq \frac{\omega_i}{\overline{\omega}_i} \leq \sqrt{1+\frac{8}{7}\beta}, \qquad \frac{\kappa(\overline{M})}{\kappa(M)} \leq 1+\frac{8}{7}\beta.
\end{equation*}
\end{corollary}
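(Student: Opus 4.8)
The plan is to reduce the entire statement to a single local eigenvalue computation and then push it through exactly the machinery already assembled for \cref{cor: CMS}. The first inequality follows verbatim from the chain combining \cref{th: eig_bounds,th: bounds_generalized_eig}, so the only genuinely new work is determining the spectrum of the local pair $(\overline{M}_e, M_e)$. The condition-number bound is then an almost immediate consequence of \cref{cor: conditioning_bound} fed with the very same local eigenvalues.

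First I would compute the generalized eigenvalues of $(\overline{M}_e, M_e)$. Exploiting the Kronecker structure $M_e = I_3 \otimes \frac{m_e}{8}I_8$ and $\overline{M}_e = I_3 \otimes (\frac{m_e}{8}I_8 + \mathrm{E}_e)$, the spectrum of $(\overline{M}_e, M_e)$ coincides, up to threefold multiplicity, with that of $(\frac{m_e}{8}I_8 + \mathrm{E}_e, \frac{m_e}{8}I_8)$. Since the right matrix is a scalar multiple of the identity, these are the eigenvalues of $I_8 + \frac{8}{m_e}\mathrm{E}_e$, and using the projector form \eqref{eq: projector} this equals $I_8 + \frac{8}{7}\beta(I_8 - \mathbf{u}\mathbf{u}^T)$. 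The eigenvalues of $I_8 - \mathbf{u}\mathbf{u}^T$ are $0$ (eigenvector $\mathbf{u}$, the rigid-body translation mode, consistent with $\mathbf{u} \in \ker(\mathrm{E}_e)$) and $1$ on $\Span\{\mathbf{u}\}^\perp$ with multiplicity seven. Hence the local pair has $\lambda_1(\overline{M}_e, M_e) = 1$ and $\lambda_m(\overline{M}_e, M_e) = 1 + \frac{8}{7}\beta$, independently of the element $e$.

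With these two numbers in hand the first inequality is immediate: inserting $\min_e \lambda_1(\overline{M}_e, M_e) = 1$ and $\max_e \lambda_m(\overline{M}_e, M_e) = 1 + \frac{8}{7}\beta$ into the chain
\begin{equation*}
\min_e \lambda_1(\overline{M}_e, M_e) \leq \lambda_1(\overline{M}, M) \leq \frac{\lambda_i(K,M)}{\lambda_i(K,\overline{M})} \leq \lambda_n(\overline{M}, M) \leq \max_e \lambda_m(\overline{M}_e, M_e)
\end{equation*}
obtained exactly as in \cref{cor: CMS}, and taking square roots, yields $1 \leq \omega_i/\overline{\omega}_i \leq \sqrt{1 + \frac{8}{7}\beta}$. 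For the condition-number bound I would apply \cref{cor: conditioning_bound} with $A = \overline{M}$ and $B = M$ (both SPD, since $E \succeq 0$ keeps $\overline{M}$ positive definite), giving $\kappa(\overline{M})/\kappa(M) \leq \kappa(\overline{M}, M) = \lambda_n(\overline{M}, M)/\lambda_1(\overline{M}, M)$. Bounding the numerator from above by $\max_e \lambda_m(\overline{M}_e, M_e) = 1 + \frac{8}{7}\beta$ and the denominator from below by $\min_e \lambda_1(\overline{M}_e, M_e) = 1$ through \cref{th: bounds_generalized_eig} then closes the estimate.

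The only step requiring any care is the local eigenvalue computation, and even there the effort is minimal once the projector representation \eqref{eq: projector} is invoked: the real content is recognizing that the rank-one correction leaves a simple two-point spectrum whose extreme values do not depend on $e$. Everything downstream is a bookkeeping exercise of substituting these eigenvalues into results already established earlier in this section, so I do not anticipate any serious obstacle.
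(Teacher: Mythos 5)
Your proposal is correct and follows essentially the same route as the paper: the identical chain of inequalities from \cref{th: eig_bounds,th: bounds_generalized_eig}, reduction via the Kronecker structure to the $8\times 8$ pair, explicit computation of the two-point local spectrum $\{1,\,1+\tfrac{8}{7}\beta\}$, and \cref{cor: conditioning_bound} for the condition-number bound. The only cosmetic difference is that you read the local eigenvalues off the projector form \eqref{eq: projector} directly, while the paper expands $\mathrm{E}_e$ in terms of $\mathbf{e}\mathbf{e}^T$; the two computations are equivalent.
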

\begin{proof}
We first prove the inequalities on the ratio of eigenfrequencies. Once again, by combining \Cref{th: eig_bounds,th: bounds_generalized_eig},
\begin{equation}
\label{eq: bounds_eigenfrequency_ratio}
     \min_e \lambda_1(\overline{M}_e, M_e) \leq \lambda_1(\overline{M},M) \leq \frac{\lambda_i(K,M)}{\lambda_i(K, \overline{M})} \leq \lambda_n(\overline{M},M) \leq \max_e \lambda_m(\overline{M}_e, M_e).
\end{equation}
Moreover, since the Kronecker product only increases the multiplicity of the eigenvalues of $(\overline{\mathrm{M}}_e, \mathrm{M}_e)$,
\begin{equation*}
    \Lambda(\overline{M}_e, M_e)=\Lambda(\overline{\mathrm{M}}_e, \mathrm{M}_e).
\end{equation*}
Additionally, due to the simple structure of $\mathrm{M}_e$, the eigenvalues of $(\overline{\mathrm{M}}_e, \mathrm{M}_e)$ are known explicitly. Denoting $\gamma_e=\frac{m_e}{56}$, we obtain $\mathrm{M}_e=7\gamma_e I_8$ and recalling \cref{eq: projector}, $\mathrm{E}_e=\beta\gamma_e(8I_8-\mathbf{e}\mathbf{e}^T)$, which is the sum of a scaled identity and rank-1 matrix. Consequently, the scaled mass matrix is given by
\begin{equation*}
    \overline{\mathrm{M}}_e=\mathrm{M}_e+\mathrm{E}_e=\gamma_e((7+8\beta)I_8-\beta \mathbf{e}\mathbf{e}^T).
\end{equation*}
Thus,
\begin{equation*}
    \Lambda(\overline{\mathrm{M}}_e, \mathrm{M}_e)=\Lambda(\gamma_e((7+8\beta)I_8-\beta \mathbf{e}\mathbf{e}^T), 7\gamma_e I_8)=\Lambda((1+\frac{8}{7}\beta) I_8-\frac{\beta}{7}\mathbf{e}\mathbf{e}^T).
\end{equation*}
The eigenvalues of this last matrix are known explicitly. Indeed, since $\lambda_8(\mathbf{e}\mathbf{e}^T)=\|\mathbf{e}\|_2^2=8$ and $\lambda_k(\mathbf{e}\mathbf{e}^T)=0$ for $k=1,\dots,7$, we deduce that
\begin{equation}
\label{eq: element_eig_olovsson}
\lambda_k(\overline{\mathrm{M}}_e, \mathrm{M}_e)=
\begin{cases}
1 & \text{ for } k=1, \\
1+\frac{8}{7}\beta & \text{ for } k=2,\dots,8.
\end{cases}
\end{equation}
The result for the eigenfrequency ratio then immediately follows by taking the square root in \eqref{eq: bounds_eigenfrequency_ratio}.

For the upper bound on the condition number, we invoke \Cref{cor: conditioning_bound},
\begin{equation}
\label{eq: bound_conditioning}
    \frac{\kappa(\overline{M})}{\kappa(M)} \leq \kappa(\overline{M},M)=\frac{\lambda_n(\overline{M},M)}{\lambda_1(\overline{M},M)} \leq \frac{\max_e \lambda_m(\overline{M}_e, M_e)}{\min_e \lambda_1(\overline{M}_e, M_e)}=\frac{\max_e \lambda_8(\overline{\mathrm{M}}_e, \mathrm{M}_e)}{\min_e \lambda_1(\overline{\mathrm{M}}_e, \mathrm{M}_e)}.
\end{equation}
where the second inequality follows from \Cref{th: bounds_generalized_eig} and the last equality follows from eigenvalue multiplicity. Finally, recalling \eqref{eq: element_eig_olovsson}, the result follows.
\end{proof}

\begin{remark}
Interestingly, none of the bounds depend on $m_e$, which bears the element dependency. A more explicit upper bound on the condition number of $\overline{M}$ can be derived from \Cref{lem: conditioning_FEM_matrices} and \cref{eq: upper_cond_M}:
\begin{equation}
\label{eq: upper_cond_Ms}
    \kappa(\overline{M}) \leq p_{\max}(1+\frac{8}{7}\beta) \frac{\max_e m_e}{\min_e m_e}.
\end{equation}
One can easily see that for a sufficiently refined uniform mesh of linear hexahedral elements with uniform density, the condition number of $M$ is $p_{\max}$. Indeed, all element lumped mass matrices are given by $M_e=\alpha I$, where $\alpha$ is a constant independent of the element and is contributed exactly once for a corner degree of freedom while it is contributed $p_{\max}$ times for an interior degree of freedom. In such cases, the bounds of \Cref{cor: bounds_olovsson} and \cref{eq: upper_cond_Ms} coincide. Moreover, for the central difference method, \Cref{cor: bounds_olovsson} also immediately yields a bound on the ratio of critical time steps:
\begin{equation*}
    1 \leq \frac{\overline{\Delta t}_c}{\Delta t_c} = \frac{\omega_n}{\overline{\omega}_n} \leq \sqrt{1+\frac{8}{7}\beta}.
\end{equation*}
In particular, we notice that the upper bound is very close to the estimate of $\sqrt{1+\beta}$ given in \cite{olovsson2006iterative}. Furthermore, the proof strategy of \Cref{cor: bounds_olovsson} can be easily adapted to other elements.
\end{remark}
 
\subsubsection{Method of [Hoffmann et al., 2023]}
\label{se: hoffmann}
A rather straightforward generalization of the method of Olovsson et al. consists in defining
\begin{equation}
\label{eq: projector_gen}
    \mathrm{E}_e=\frac{\beta m_e}{7}(I_8-UU^T)
\end{equation}
where $U \in \mathbb{R}^{8 \times r}$ is a matrix whose columns form an orthonormal basis for the low-frequency mode shapes, including rigid body translations and rotations. In engineering terms, this mass scaling strategy preserves both linear and angular momentum (or translational and rotational inertia). By including more of the low-frequency content locally, one may hope for greater accuracy globally. Recently, Hoffmann et al. \cite{hoffmann2023finite} suggested an improvement of the method of Olovsson et al. somewhat mimicking this idea and claiming similar increases on the critical time step while achieving significantly better accuracy for the smallest eigenvalues. The element scaling matrix is defined as $E_e= I_3 \otimes \mathrm{E}_e$, where
\begin{equation*}
    \mathrm{E}_e = \frac{\beta \widetilde{\gamma}_e}{4}(A \otimes G)
\end{equation*}
with
\begin{equation*}
    A =
    \begin{pmatrix}
        1 & -1 \\
        -1 & 1
    \end{pmatrix},
    \quad \text{and} \quad
    G =
    \begin{pmatrix}
        4 & 2 & 1 & 2 \\
        2 & 4 & 2 & 1 \\
        1 & 2 & 4 & 2 \\
        2 & 1 & 2 & 4
    \end{pmatrix}.
\end{equation*}
The factor $\widetilde{\gamma}_e$ depends on the element geometry and material but is not explicitly defined by the authors in \cite{hoffmann2023finite}. However, since their method builds on the work of Olovsson et al., we assume that $\tilde{\gamma}_e=7\gamma_e=\frac{m_e}{8}$ such that $\beta=1$ doubles the diagonal entries of $M_e$, similarly to the method of Olovsson et al.

\begin{corollary}[Mass scaling of {[Hoffmann et al., 2023]}]
\label{cor: bounds_hoffmann}
For the mass scaling method of Hoffmann et al. \cite{hoffmann2023finite}, the following inequalities hold:
\begin{equation*}
    1 \leq \frac{\omega_i}{\overline{\omega}_i} \leq \sqrt{1+\frac{9}{2}\beta}, \qquad \frac{\kappa(\overline{M})}{\kappa(M)} \leq 1+\frac{9}{2}\beta.
\end{equation*}
\end{corollary}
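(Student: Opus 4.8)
The plan is to follow the proof of \cref{cor: bounds_olovsson} line for line; the only genuinely new ingredient is the explicit spectrum of the element pair $(\overline{\mathrm{M}}_e,\mathrm{M}_e)$. First I would combine \cref{th: eig_bounds,th: bounds_generalized_eig} exactly as in \eqref{eq: bounds_eigenfrequency_ratio} to sandwich the eigenfrequency ratio between $\min_e \lambda_1(\overline{M}_e,M_e)$ and $\max_e \lambda_m(\overline{M}_e,M_e)$. As in the Olovsson case, both $M_e=I_3\otimes\mathrm{M}_e$ and $\overline{M}_e=I_3\otimes\overline{\mathrm{M}}_e$ share the Kronecker factor $I_3$, so $\Lambda(\overline{M}_e,M_e)=\Lambda(\overline{\mathrm{M}}_e,\mathrm{M}_e)$ with tripled multiplicities, and the problem reduces to the $8\times 8$ pair.

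Next, since $\mathrm{M}_e=\tfrac{m_e}{8}I_8$ is scalar and $\widetilde{\gamma}_e=\tfrac{m_e}{8}$, I would divide it out to obtain the standard eigenproblem
\begin{equation*}
    \Lambda(\overline{\mathrm{M}}_e,\mathrm{M}_e)=\Lambda\!\left(I_8+\tfrac{\beta}{4}(A\otimes G)\right),
\end{equation*}
so everything hinges on the spectrum of $A\otimes G$. The key observation is that $G$ is \emph{circulant} with first row $(4,2,1,2)$, so its eigenvalues are read off directly from the discrete Fourier transform of that row as $\{9,3,1,3\}$; meanwhile $A$ has eigenvalues $\{0,2\}$. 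By the Kronecker product spectral rule, $\Lambda(A\otimes G)$ is the set of pairwise products, whose extremes are $0$ (from $\ker A$) and $2\cdot 9=18$.

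Substituting back gives $\lambda_1(\overline{\mathrm{M}}_e,\mathrm{M}_e)=1$ and $\lambda_8(\overline{\mathrm{M}}_e,\mathrm{M}_e)=1+\tfrac{\beta}{4}\cdot 18=1+\tfrac{9}{2}\beta$, independently of $m_e$. Taking square roots in the sandwich inequality yields the eigenfrequency bound, while feeding these two element extremes into \cref{cor: conditioning_bound,th: bounds_generalized_eig} exactly as in \eqref{eq: bound_conditioning} gives $\kappa(\overline{M})/\kappa(M)\le 1+\tfrac{9}{2}\beta$. The lower bound $\omega_i/\overline{\omega}_i\ge 1$ is automatic from $\min_e\lambda_1=1$ and is consistent with $\mathrm{E}_e\succeq 0$, which holds because $A$ and $G$ are both positive semidefinite (the latter since all its Fourier eigenvalues are positive), whence $A\otimes G\succeq 0$ and \cref{lem: equivalence_conditions} applies.

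I expect the main, though modest, obstacle to be the spectral computation of $A\otimes G$: recognizing the circulant structure of $G$ is what collapses an otherwise tedious $8\times 8$ characteristic polynomial into a one-line Fourier evaluation, after which the Kronecker rule makes the extreme eigenvalue $18$ transparent. Everything else is a mechanical transcription of \cref{cor: bounds_olovsson}.
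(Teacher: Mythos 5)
Your proposal is correct and follows essentially the same route as the paper's proof: the same sandwich via \cref{th: eig_bounds,th: bounds_generalized_eig}, the same reduction to the $8\times 8$ pair, and the same Kronecker spectral rule giving extreme element eigenvalues $1$ and $1+\tfrac{9}{2}\beta$ from $\Lambda(A)=\{0,2\}$ and $\Lambda(G)=\{1,3,3,9\}$. The only (harmless) difference is that you obtain $\Lambda(G)$ from its circulant structure via a Fourier evaluation, whereas the paper simply states the spectrum.
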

\begin{proof}
The proof arguments follow exactly the same lines as in \Cref{cor: bounds_olovsson}. Only the definition of the scaling matrix changes and so do the eigenvalues of $(\overline{\mathrm{M}}_e, \mathrm{M}_e)$. Yet, computing them is again a simple exercise:
\begin{equation*}
    \Lambda(\overline{\mathrm{M}}_e, \mathrm{M}_e) = \Lambda(7\gamma_e ( I_8 + \frac{\beta}{4}(A \otimes G)), 7\gamma_e I_8) = 1+\frac{\beta}{4}\Lambda(A)\Lambda(G),
\end{equation*}
where the multiplication of two sets is the set containing all elementwise multiplications of any two elements from the sets. The spectrum of $A$ and $G$ (with multiplicities) is
\begin{equation*}
    \Lambda(A) = \{0,2\}, \quad \text{and} \quad \Lambda(G) = \{1,3,3,9\}.
\end{equation*}
Consequently,
\begin{equation}
\label{eq: element_eig_hoffmann}
\lambda_k(\overline{\mathrm{M}}_e, \mathrm{M}_e)=
\begin{cases}
1 & \text{ for } k=1,2,3,4, \\
1+\frac{\beta}{2} & \text{ for } k=5, \\
1+\frac{3\beta}{2} & \text{ for } k=6,7, \\
1+\frac{9\beta}{2} & \text{ for } k=8.
\end{cases}
\end{equation}
The results then follow from direct substitution in \eqref{eq: bounds_eigenfrequency_ratio} and \eqref{eq: bound_conditioning}.
\end{proof}
In particular, \Cref{cor: bounds_olovsson,cor: bounds_hoffmann} suggest that increasing the critical time step and retaining a moderate condition number for the scaled mass matrix are conflicting objectives. This has already been observed numerically by several authors and will be confirmed in the next section.

\section{Numerical validation}
\label{se: numerical_experiments}
This section provides a few numerical experiments supporting our theoretical findings. Our experiments specifically focus on local methods, when the transformed eigenvalues of the global system are unknown. We refer to the original articles cited herein for further experiments assessing the accuracy, especially for transient problems. The values reported for the critical time step are the ones for the central difference method \cref{eq: CFL_central_difference}, ubiquitous in explicit dynamics. All experiments were done with an in-house finite element code while the meshes were generated with GMSH \cite{geuzaine2009gmsh}.

\subsection{Ad hoc local SMS}
The global behavior of the methods of Olovsson et al. \cite{olovsson2005selective} and Hoffmann et al. \cite{hoffmann2023finite} is closely tied to the element constructions. For illustrating it, we first consider a single thin linear hexahedral element of size $1 \times 1 \times 10^{-3}$, whose Young modulus, Poisson ratio and density are $E=207$ GPa, $\nu=0.3$ and $\rho=7800$ kg/m\textsuperscript{3}, respectively. The element (lumped) mass matrix $M_e$ is a scalar matrix such that the eigenvectors of $(K_e,M_e)$ are simply eigenvectors of the stiffness matrix. For the method of Olovsson et al., all eigenvectors $\mathbf{u}_k$ for $k=7,8,\dots,m$ (i.e. $\mathbf{u}_k \in \mathbb{R}^m \setminus \ker(K_e)$) are also generalized eigenvectors of $(\overline{M}_e, M_e)$ since they are in the kernel of $I_3 \otimes \mathbf{e}\mathbf{e}^T$. The associated eigenvalues are $1+\frac{8}{7}\beta$ according to \cref{eq: element_eig_olovsson} and it follows from \Cref{lem: common_eigenvector} that the eigenvalues of the scaled matrix pair are 
\begin{equation*}
    \lambda_{i_k}(K_e,\overline{M}_e) = \frac{\lambda_k(K_e,M_e)}{Q_e(\mathbf{u}_k)} = \frac{\lambda_k(K_e,M_e)}{1+\frac{8}{7}\beta} \quad \text{where} \quad Q_e(\mathbf{x})=\frac{\mathbf{x}^T \overline{M}_e \mathbf{x}}{\mathbf{x}^T M_e \mathbf{x}}
\end{equation*}
is the Rayleigh quotient. In this case, the perturbation scales down all eigenvalues uniformly and therefore preserves the eigenvalue ordering (i.e. $i_k=k$) for $k \geq 7$. These results are shown graphically in \Cref{fig: 3D_Elasticity_plate_FEM_Olovsson_eig_element} for $\beta=1$ and confirm our theoretical findings. The results are far more interesting when considering the method of Hoffmann et al. Also in this case it turns out all eigenvectors of $(K_e,M_e)$ are eigenvectors of $(\overline{M}_e,M_e)$. The deep understanding of the deformation modes from engineering practice certainly guided the construction of the scaling matrix. These modes bear different names and are carefully listed and grouped in \citep[][Figure 3]{cocchetti2013selective}. Applying \Cref{lem: common_eigenvector} once again, we deduce that
\begin{equation*}
    \lambda_{i_k}(K_e,\overline{M}_e) = \frac{\lambda_k(K_e,M_e)}{Q_e(\mathbf{u}_k)}.
\end{equation*}
where the overline now obviously refers to scaled quantities for the method of Hoffmann et al. In this case, $Q_e(\mathbf{u}_k)$ may take any value among those listed in \cref{eq: element_eig_hoffmann}. Therefore, the scaling may not preserve the eigenvalue ordering. The values of $Q_e(\mathbf{u}_k)$ for $k=7,\dots,m$ are shown in \Cref{fig: 3D_Elasticity_plate_FEM_Hoffmann_eig_element} for $\beta=1$ alongside the eigenvalues of the scaled and original matrix pairs. It appears that $\lambda_{i_{20}}(K_e,\overline{M}_e)=\lambda_{i_{21}}(K_e,\overline{M}_e) < \lambda_{i_{19}}(K_e,\overline{M}_e)$ and confirms that the scaling does not preserve the eigenvalue ordering. Generally speaking, comparing the $k$th largest eigenvalues for the original and scaled matrix pairs is only relevant for ``small'' perturbations. In other cases, one should better understand how the original eigenvalues are transformed.

\begin{figure}[H]
    \centering
    \includegraphics[width=0.8\linewidth]{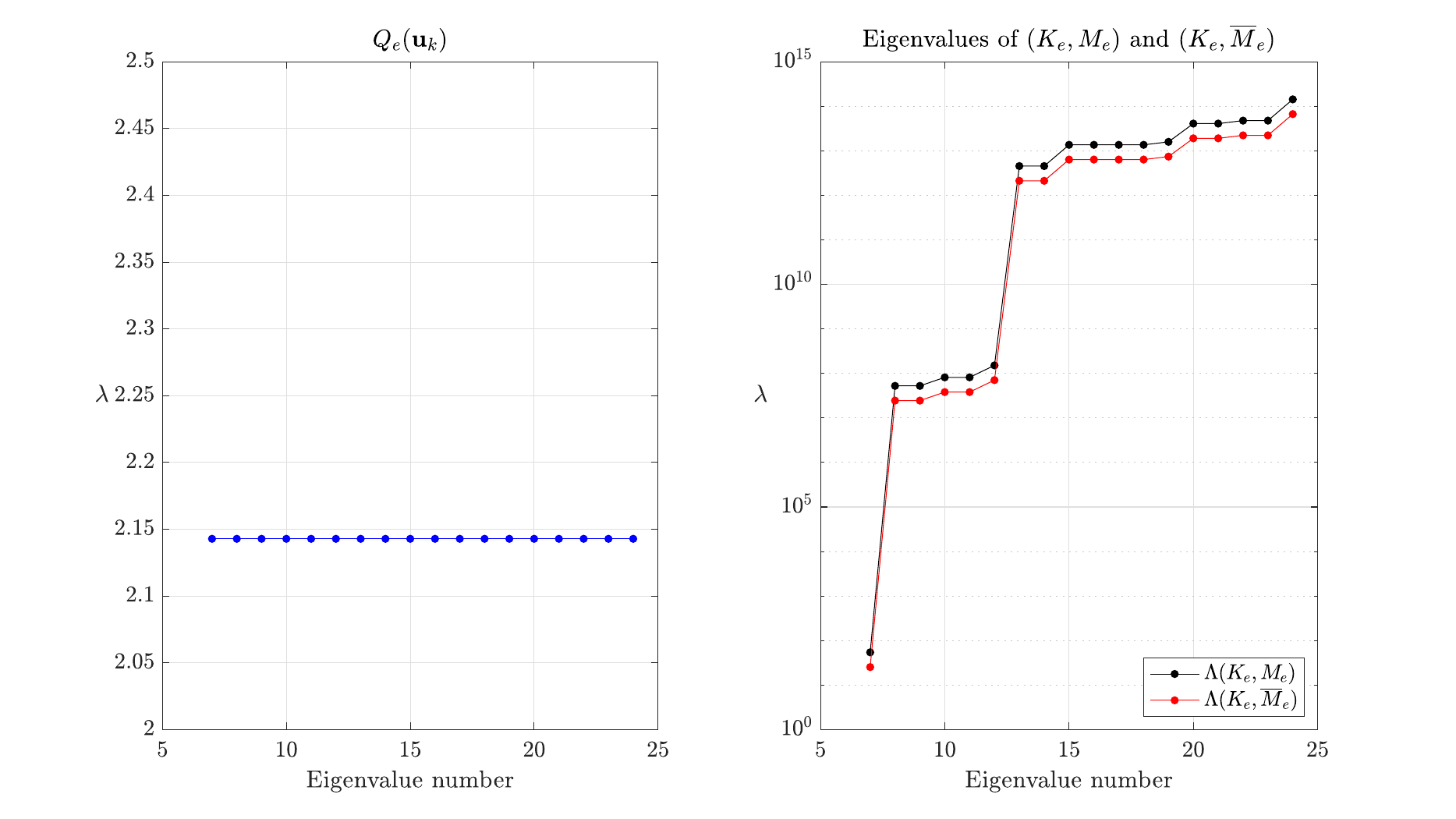}
    \caption{Values of $Q_e(\mathbf{u}_k)$ for $k=7,\dots,m$ and eigenvalues of $(K_e,M_e)$ and $(K_e,\overline{M}_e)$ for $\beta=1$ and the method of Olovsson et al.}
    \label{fig: 3D_Elasticity_plate_FEM_Olovsson_eig_element}
\end{figure}

\begin{figure}[H]
    \centering
    \includegraphics[width=0.8\linewidth]{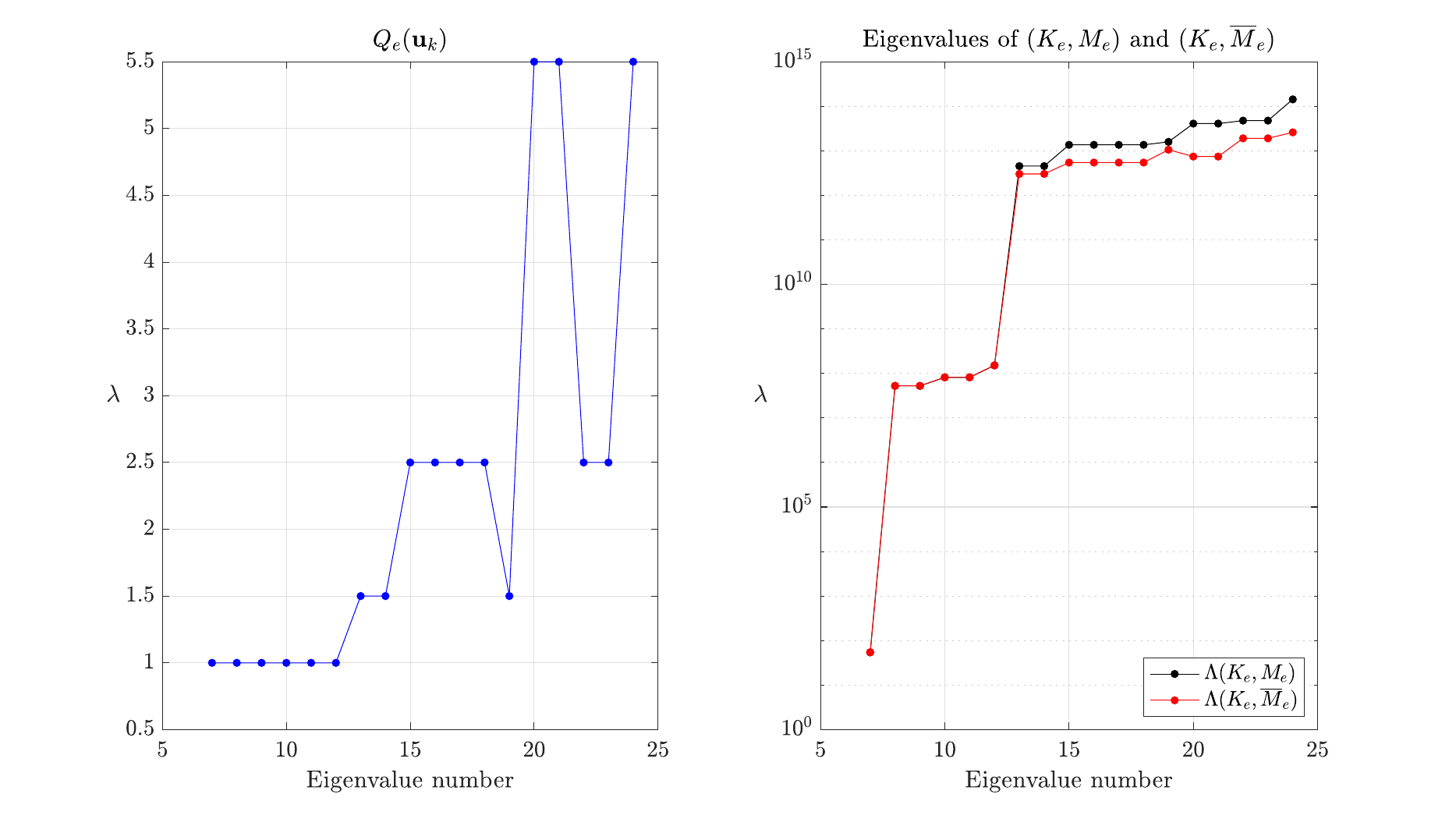}
    \caption{Values of $Q_e(\mathbf{u}_k)$ for $k=7,\dots,m$ and eigenvalues of $(K_e,M_e)$ and $(K_e,\overline{M}_e)$ for $\beta=1$ and the method of Hoffmann et al.}
    \label{fig: 3D_Elasticity_plate_FEM_Hoffmann_eig_element}
\end{figure}

The insights drawn locally will prove useful for illustrating some global properties. As benchmark, we consider a thin metal strip of length $\times$ width $\times$ thickness dimensions $200 \times 20 \times 2$ mm, whose Young modulus, Poisson ratio and density are again $E=207$ GPa, $\nu=0.3$ and $\rho=7800$ kg/m\textsuperscript{3}, respectively. The plate is discretized with $40 \times 5 \times 4$ nodes in each respective dimension, producing $348$ linear hexahedral elements. The resulting mesh is shown in \Cref{fig: plate_mesh_n40_5_4}. In this example, we test the tightness of the bounds in \Cref{cor: bounds_olovsson,cor: bounds_hoffmann} for the methods of Olovsson et al. \cite{olovsson2005selective} and Hoffmann et al. \cite{hoffmann2023finite}, respectively. The increase of the critical time step is displayed in \Cref{fig: 3D_Elasticity_plate_FEM_delta_t_crit_hex8} along with its upper bound for different values of the scaling parameter $\beta$ ranging from $1$ to $500$. For the method of Olovsson et al., the upper bound is very sharp over the entire range of values tested, whereas for the method of Hoffmann et al., the upper bound is satisfactory only for moderate values of $\beta$. For larger values, the ratio flattens out and the critical time step most likely becomes constrained by other deformation modes, not targeted by the method of Hoffmann et al. Nevertheless, both methods may achieve a similar increase of the critical time step, albeit for different values of $\beta$.

\begin{figure}[H]
    \centering
    \includegraphics[width=0.8\linewidth]{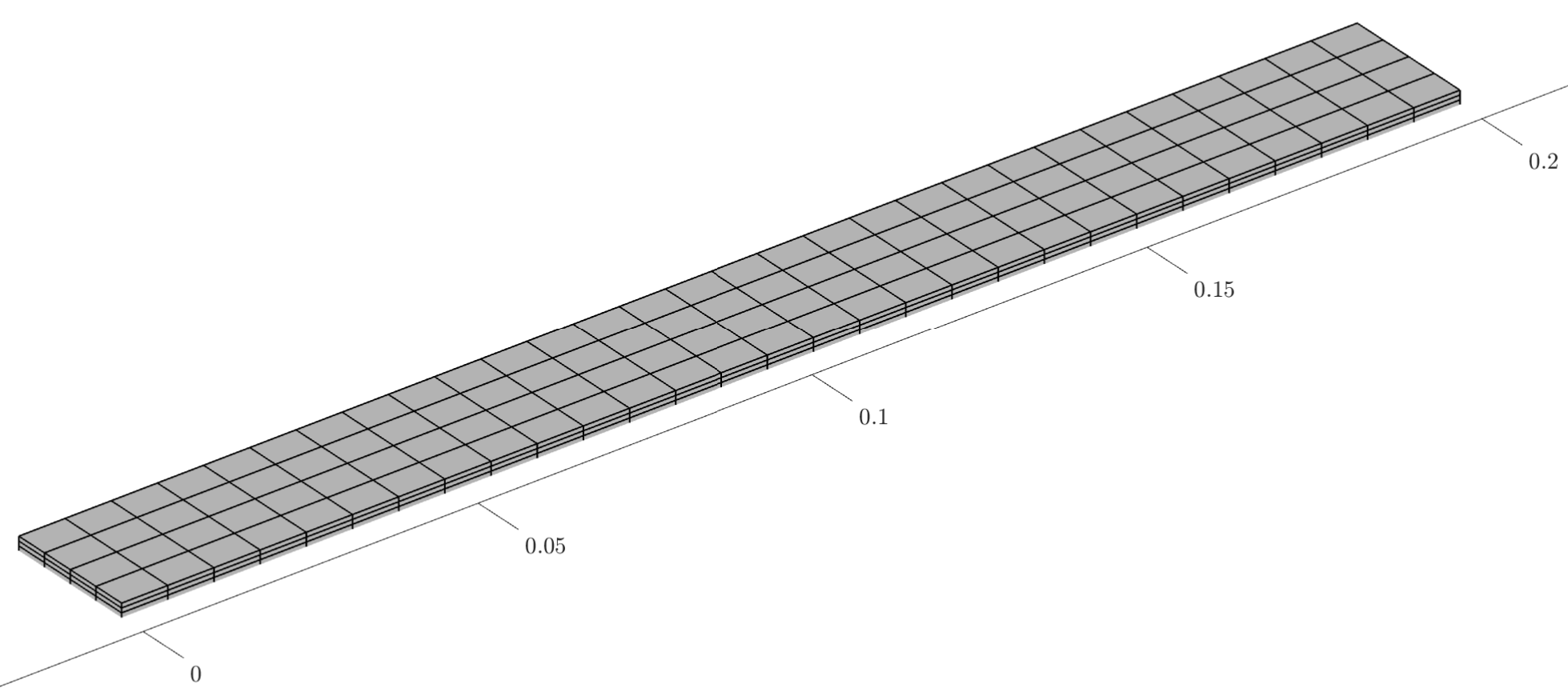}
    \caption{Hexahedral finite element mesh}
    \label{fig: plate_mesh_n40_5_4}
\end{figure}

\begin{figure}[H]
    \centering
    \includegraphics[width=0.5\linewidth]{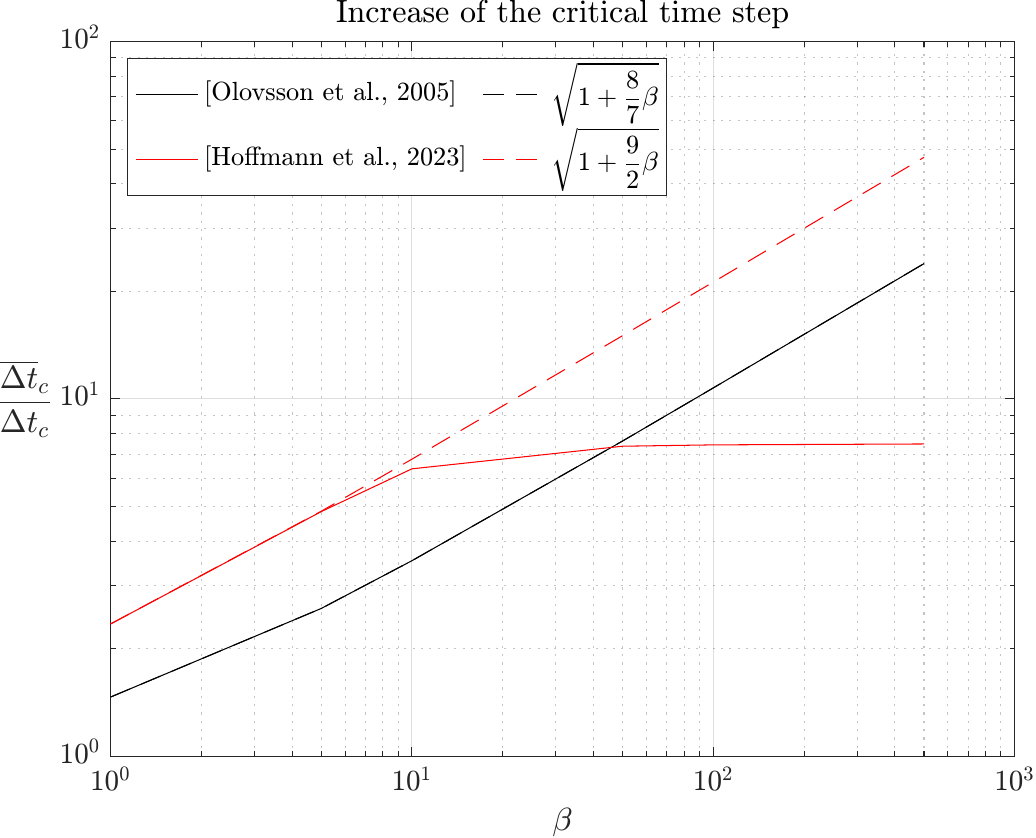}
    \caption{Increase of the critical time step for the methods of Olovsson et al. and Hoffmann et al.}
    \label{fig: 3D_Elasticity_plate_FEM_delta_t_crit_hex8}
\end{figure}

Following standard practice, \Cref{fig: 3D_Elasticity_plate_FEM_eig_ratio_hex8} show the ratio of scaled over unscaled frequencies for selected values of $\beta$. Note that the upper bounds of \Cref{cor: bounds_olovsson,cor: bounds_hoffmann} (valid over the entire spectrum) become lower bounds in this case. Admittedly, except preserving perhaps the first couple of eigenfrequencies, the method of Olovsson et al. seems quite inaccurate and \Cref{fig: 3D_Elasticity_plate_FEM_Olovsson_eig_ratio_hex8} fuels existing concerns raised in \citep{tkachuk2014local,hoffmann2023finite}. On the contrary, the method of Hoffmann et al. seems vastly superior, achieving a similar increase of the critical time step while yielding greater accuracy of the lower frequencies. Interestingly, the overall trend reported in \Cref{fig: 3D_Elasticity_plate_FEM_eig_ratio_hex8}. is reminiscent of the local construction and the targeted modes.

\begin{figure}[H]
    \centering
    \begin{subfigure}[t]{0.48\textwidth}
    \centering
    \includegraphics[width=\textwidth]{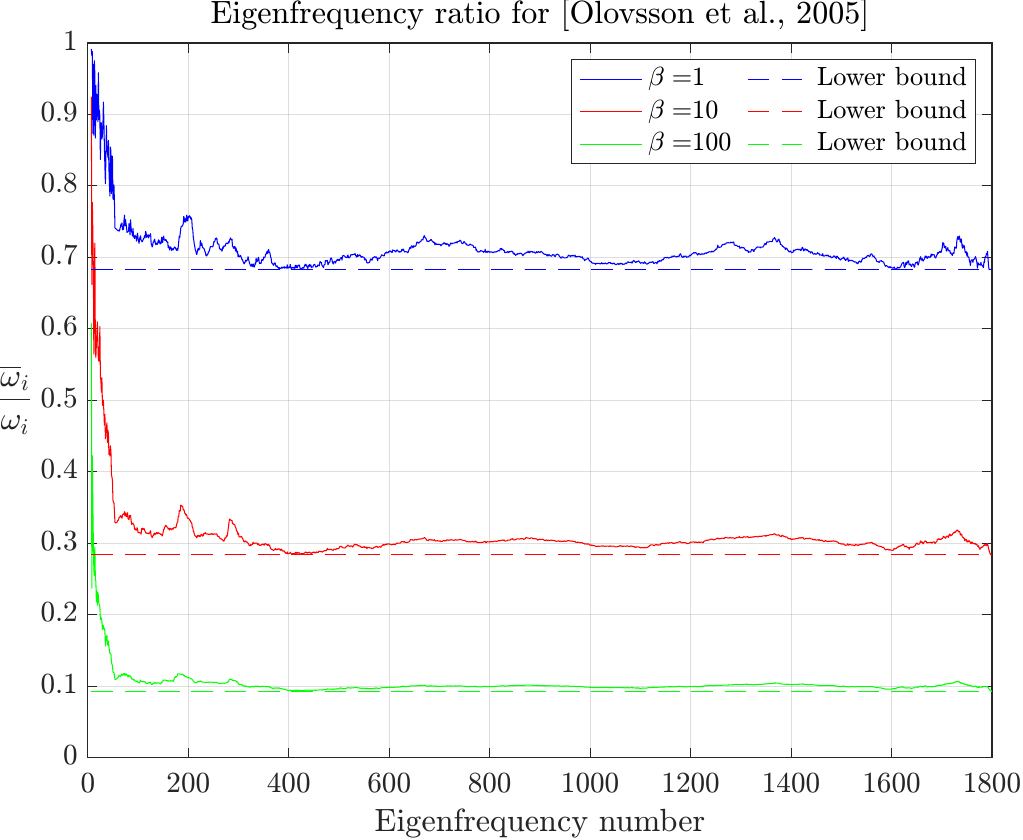}
    \caption{Method of Olovsson et al.}
    \label{fig: 3D_Elasticity_plate_FEM_Olovsson_eig_ratio_hex8}
    \end{subfigure}
    \hfill
    \begin{subfigure}[t]{0.48\textwidth}
    \centering
    \includegraphics[width=\textwidth]{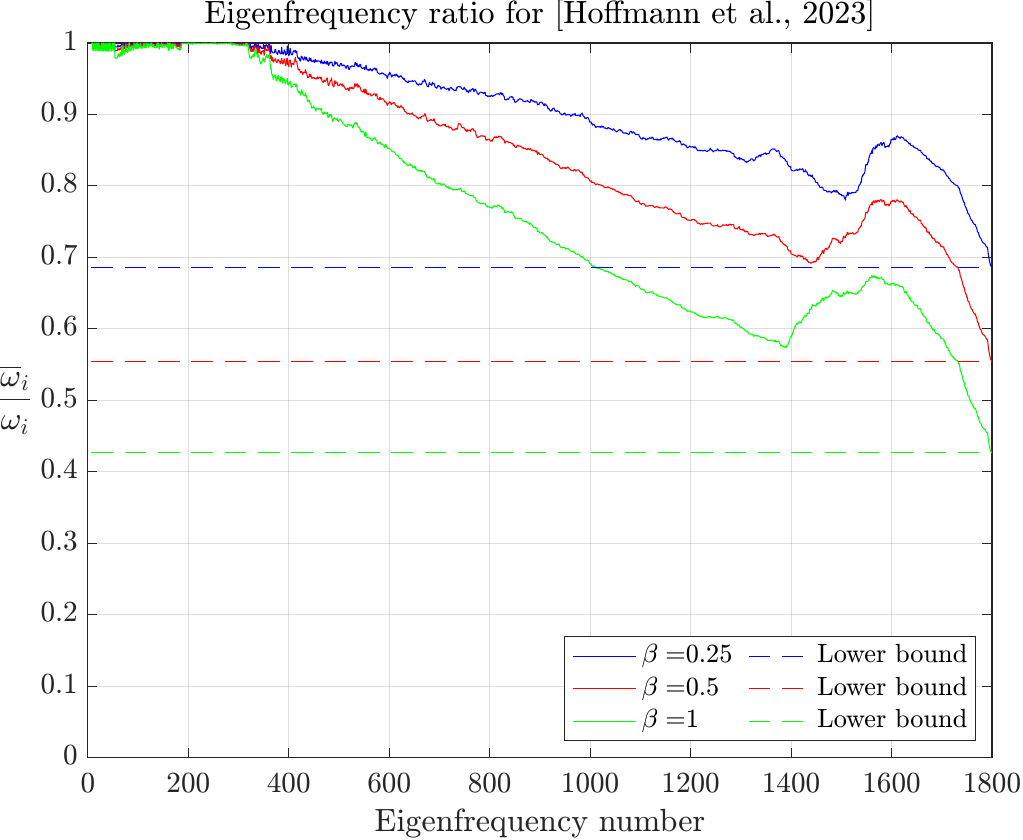}
    \caption{Method of Hoffmann et al.}
    \label{fig: 3D_Elasticity_plate_FEM_Hoffmann_eig_ratio_hex8}
    \end{subfigure}
    \hfill
    \caption{Eigenfrequency ratio for different values of the scaling parameter $\beta$}
    \label{fig: 3D_Elasticity_plate_FEM_eig_ratio_hex8}
\end{figure}

If linear systems with the scaled mass matrix are solved iteratively, as advocated in \cite{olovsson2006iterative}, its condition number is often monitored. The upper bounds in \Cref{cor: bounds_olovsson,cor: bounds_hoffmann} suggest an increase in the number of iterations of iterative solvers (e.g. Conjugate Gradients) as the scaling parameter grows larger and may potentially offset the saving in the number of time steps. Fortunately, as shown in \Cref{fig: 3D_Elasticity_plate_FEM_cond_hex8}, the upper bounds seem quite pessimistic in this case. Although the growth is indeed linear, the rate is smaller than predicted. The numerically observed rates are shown in dotted line. The reason for the discrepancy between the theoretical rates and the observed ones is quite subtle and is due to a step in the proof of \Cref{lem: conditioning_FEM_matrices}. Namely, the bounds $1 \leq \|\mathsf{L}\mathbf{x}\|_2^2 \leq p_{\max}$ may be loose for specific choices of $\mathbf{x}$. For the method of Olovsson et al., one may derive a refined bound in the asymptotic regime as $\beta \to \infty$. For this purpose, we recall that for a lumped mass matrix $M$ with uniform elements and material density, $\kappa(M)=p_{\max}$, where $p_{\max} \in \mathbb{N}^*$ is the maximum number of elements to which a node is connected. In our example, $p_{\max}=8$ and is attained for interior nodes. Furthermore, from the proof of \Cref{lem: conditioning_FEM_matrices}, the following bounds hold:
\begin{equation*}
    \min_e \lambda_1(\overline{M}_e) \|\mathsf{L}\mathbf{x}\|_2^2 \leq \mathbf{x}^T \overline{M}\mathbf{x} \leq \max_e \lambda_m(\overline{M}_e) \|\mathsf{L}\mathbf{x}\|_2^2,
\end{equation*}
where 
\begin{equation*}
    \overline{M}=\sum_{e=1}^N L_e^T\overline{M}_eL_e, \quad \overline{M}_e = I_3 \otimes \overline{\mathrm{M}}_e
\end{equation*}
and\begin{equation*}
    \overline{\mathrm{M}}_e=\mathrm{M}_e+\mathrm{E}_e=\gamma_e((7+8\beta)I_8-\beta \mathbf{e}\mathbf{e}^T) = \gamma_e(7 I_8 + \beta (8I_8-\mathbf{e}\mathbf{e}^T)).
\end{equation*}
As previously noted, $\mathrm{E}_e = \gamma_e\beta (8I_8-\mathbf{e}\mathbf{e}^T) \succeq 0$ and its norm increases with $\beta$. Therefore, as $\beta \to \infty$, the eigenvector $\mathbf{x}_1$ associated to the smallest eigenvalue $\lambda_1(\overline{M})$ will try to annihilate this term. Since $\mathbf{e} \in \ker(E_e)$ and $L_e\mathbf{x}$ simply extracts entries from $\mathbf{x}$, we expect $\mathbf{x}_1$ to converge to the (normalized) vector of all ones; i.e. $\mathbf{x}_1 =\frac{1}{\sqrt{n}}\mathbf{1}$. In this case,
\begin{equation*}
\|\mathsf{L}\mathbf{x}_1\|_2^2 = \frac{1}{n} \sum_{i=1}^n p_i = \frac{1}{n} \sum_{e=1}^N \sum_{j=1}^m 1 = \frac{mN}{n},
\end{equation*}
reduces to the average connectivity, where $N$ is the number of elements, $m$ is the number of degrees of freedom per element and $n$ is the total number of degrees of freedom. Finally, putting all the pieces back together,
\begin{equation*}
    \frac{\kappa(\overline{M})}{\kappa(M)} \leq \frac{\max_e \lambda_m(\overline{M}_e) }{\min_e \lambda_1(\overline{M}_e) \|\mathsf{L}\mathbf{x}_1\|_2^2} \lesssim  \frac{n}{mN} \frac{7+8\beta}{7} \frac{\max_e \gamma_e}{\min_e \gamma_e} = \frac{n}{mN}\left(1 + \frac{8}{7}\beta \right).
\end{equation*}
Substituting the discretization parameters, we obtain $\frac{8n}{7mN} \approx 0.2463$, which is very close to the observed rate of $0.25$. Our findings also consistently match the observations of Olovsson et al. \cite{olovsson2006iterative}, where the (non-normalized) condition number $\kappa(\overline{M})$ increased by roughly $1+2 \beta$. However, this rate is only expected to hold asymptotically and under the previously stated assumptions.

\begin{figure}[H]
    \centering
    \includegraphics[width=0.5\linewidth]{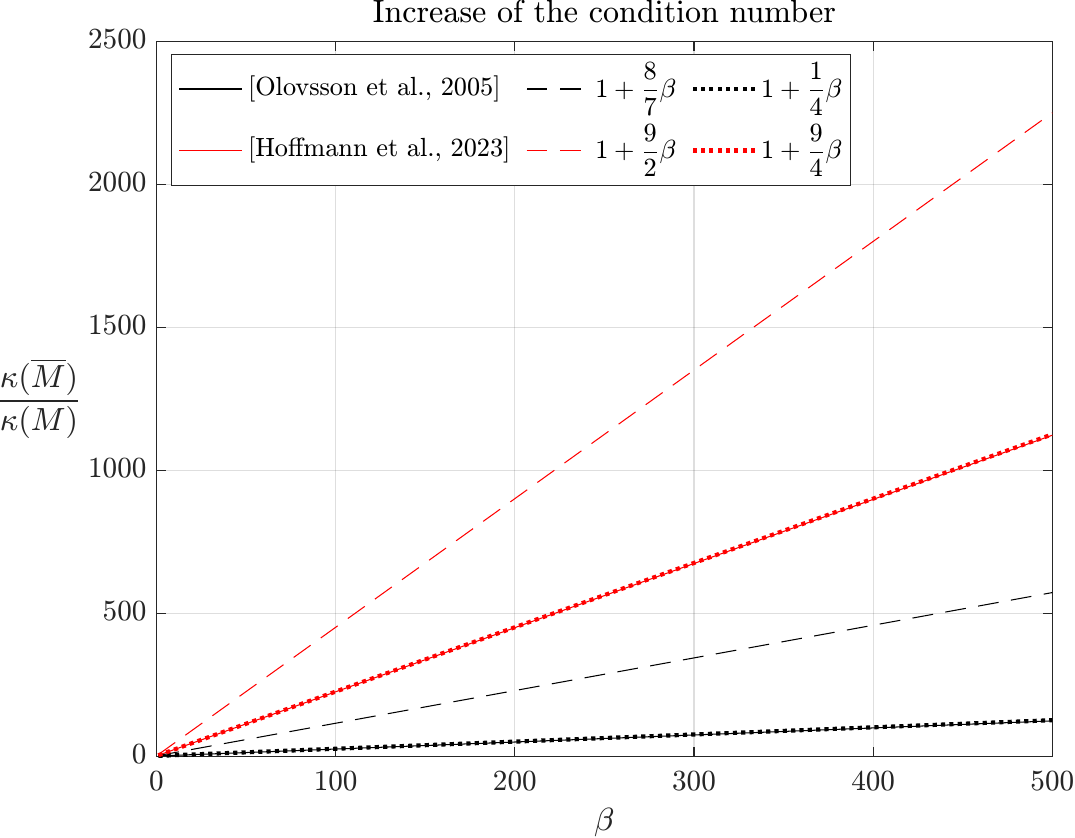}
    \caption{Increase of the condition number for the methods of Olovsson et al. and Hoffmann et al. for a uniform discretization with uniform material properties.}
    \label{fig: 3D_Elasticity_plate_FEM_cond_hex8}
\end{figure}

Nevertheless, the theoretically sound upper bound of $1+\frac{8}{7}\beta$ was much sharper on non-uniform meshes, suggesting that the behavior of the condition number is strongly problem dependent. Performing a similar analysis for the method of Hoffmann et al. is far more involved and does not seem obvious at this stage.

\subsection{Local deflation}
We consider the same benchmark as in the previous section but employ the local deflation techniques presented in \Cref{se: local_deflation}. These techniques scale the element matrix pairs $(K_e,M_e)$ by simply applying \Cref{def: deflated_pencil} locally; i.e.
\begin{align*}
    \overline{K}_e &= K_e+V_ef(D_{e,2})V_e^T, \\
    \overline{M}_e &= M_e+V_eg(D_{e,2})V_e^T,
\end{align*}
where $V_e=M_e U_{e,2}$, $D_{e,2}$ is the diagonal matrix containing the $r$ largest eigenvalues of $(K_e,M_e)$ and $U_{e,2}$ contains the associated eigenvectors along its columns. Two specific choices of functions $f$ and $g$ where considered in \Cref{se: local_deflation}:
\begin{equation*}
    \text{Strategy 1: }
    \begin{cases}
        f(\lambda)=0, & \\
        g(\lambda)=\alpha,
    \end{cases}
    \qquad \text{Strategy 2: }
    \begin{cases}
        f(\lambda)=0, & \\
        g(\lambda)=\frac{\lambda}{\lambda_{m-r}(K_e,M_e)}-1,
    \end{cases}
\end{equation*}
where $\alpha>0$ is a scaling parameter. In both cases the stiffness is unaltered ($\overline{K}_e=K_e$) and the eigenvalues of the scaled matrix pairs $(\overline{K}_e, \overline{M}_e)$ are (see \Cref{th: low_rank_pert_AB})
\begin{align*}
    &\text{Strategy 1: }
    \lambda_{i_k}(\overline{K}_e, \overline{M}_e)=
    \begin{cases}
    \lambda_k(K_e,M_e) & \text{ for } k=1,\dots,m-r, \\
    \frac{\lambda_k(K_e,M_e)}{1+\alpha} & \text{ for } k=m-r+1,\dots,m.
    \end{cases} \\
    &\text{Strategy 2: }
    \lambda_k(\overline{K}_e, \overline{M}_e)=
    \begin{cases}
    \lambda_k(K_e,M_e) & \text{ for } k=1,\dots,m-r, \\
    \lambda_{m-r}(K_e,M_e) & \text{ for } k=m-r+1,\dots,m.
    \end{cases}
\end{align*}
Once again, local properties provide valuable insight on the global behavior and we consider for the time being a single thin linear hexahedral element of size $1 \times 1 \times 10^{-3}$ with the same material properties as the previous example. By construction, the matrix pairs $(K_e,M_e)$ and $(\overline{M}_e,M_e)$ share the same eigenvectors and \Cref{lem: common_eigenvector} applies. Note, however, that the first strategy may not preserve the eigenvalue ordering ($i_k \neq k)$. This happens whenever $\alpha$ is large relative to the eigenvalue gap:
\begin{equation}
\label{eq: eig_gap}
    \alpha > \frac{\lambda_{m-r+1}(K_e,M_e)}{\lambda_{m-r}(K_e,M_e)}-1.
\end{equation}
Consequently, for the first strategy, the largest eigenvalue of the scaled matrix pair is
\begin{equation*}
    \lambda_m(\overline{K}_e, \overline{M}_e) = \max \left\{\lambda_{m-r}(K_e,M_e), \ \frac{\lambda_m(K_e,M_e)}{1+\alpha}\right\}.
\end{equation*}
Thus, as $\alpha$ gets increasingly large, the largest eigenvalue is given by $\lambda_{m-r}(K_e,M_e)$, which no longer depends on $\alpha$ but only on the deflation rank and therefore places a threshold on the maximum increase of the critical time step. \Cref{fig: 3D_Elasticity_plate_FEM_local_defl_st1_eig_element_r12_alpha1,fig: 3D_Elasticity_plate_FEM_local_defl_st1_eig_element_r7_alpha1e2} show two different cases where the gap assumption \cref{eq: eig_gap} is satisfied and violated, respectively. For the second example, one also notices that $\lambda_m(\overline{K}_e, \overline{M}_e)=\lambda_{m-r}(K_e,M_e)$.

\begin{figure}[H]
    \centering
    \includegraphics[width=0.8\linewidth]{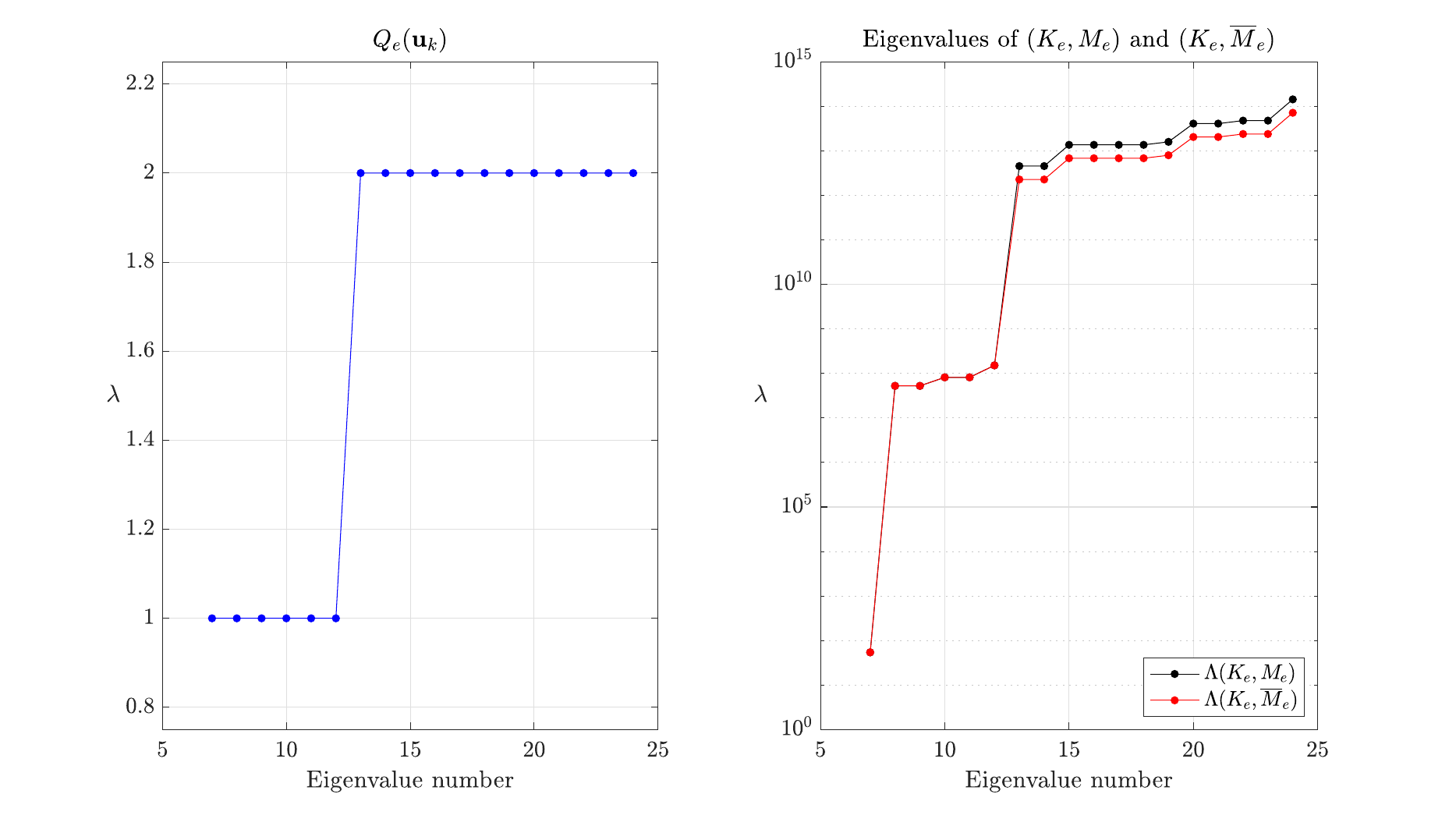}
    \caption{Values of $Q_e(\mathbf{u}_k)$ for $k=7,\dots,m$ and eigenvalues of $(K_e,M_e)$ and $(K_e,\overline{M}_e)$ for the first strategy with $r=12$ and $\alpha=1$}
    \label{fig: 3D_Elasticity_plate_FEM_local_defl_st1_eig_element_r12_alpha1}
\end{figure}

\begin{figure}[H]
    \centering
    \includegraphics[width=0.8\linewidth]{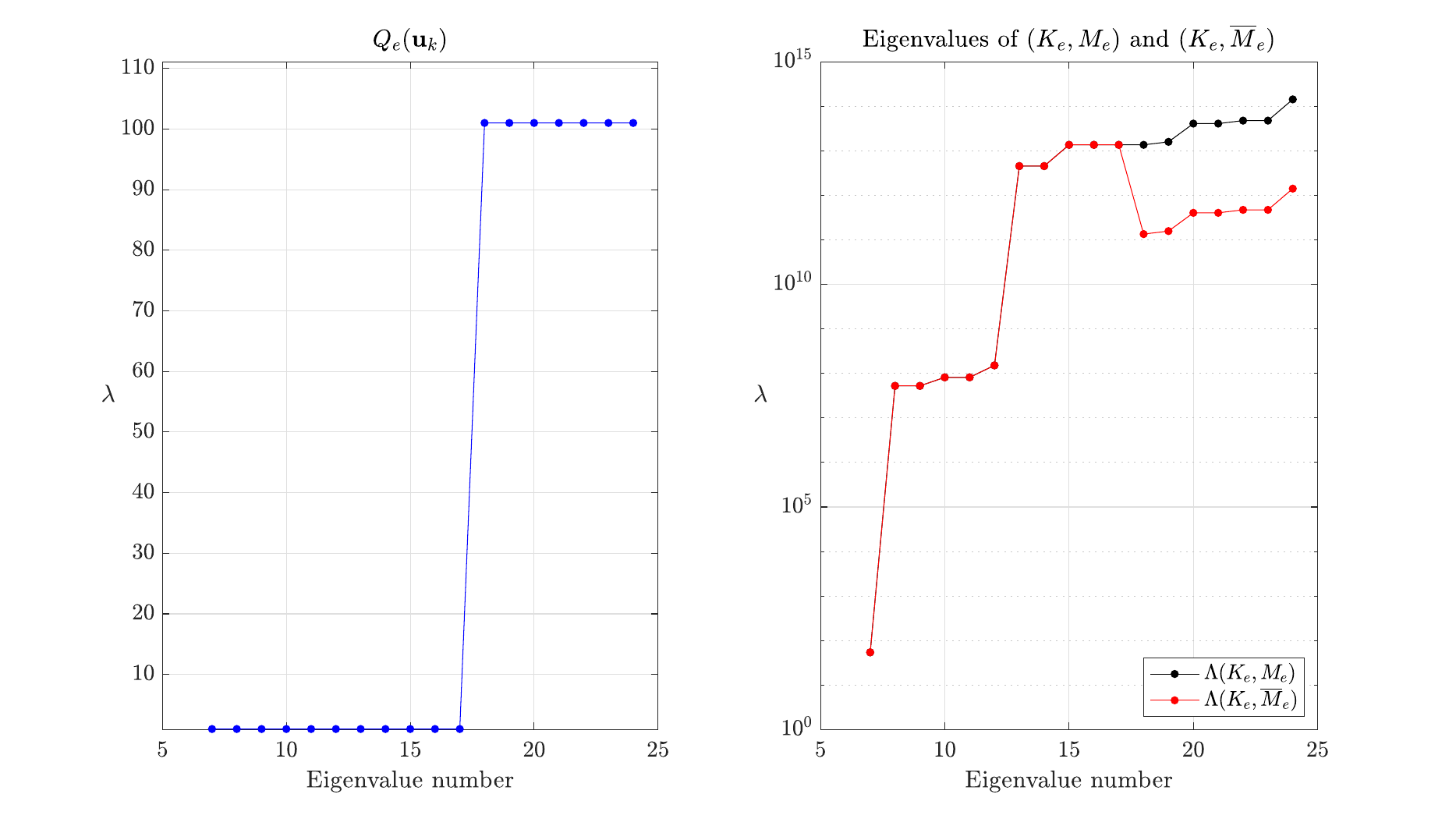}
    \caption{Values of $Q_e(\mathbf{u}_k)$ for $k=7,\dots,m$ and eigenvalues of $(K_e,M_e)$ and $(K_e,\overline{M}_e)$ for the first strategy with $r=7$ and $\alpha=10^2$}
    \label{fig: 3D_Elasticity_plate_FEM_local_defl_st1_eig_element_r7_alpha1e2}
\end{figure}

Choosing a large value of $\alpha$ may quickly undermine the quality of the solution as it moves inaccurate high frequency modes to the low frequency regime. González and Park \cite{gonzalez2020large} already highlighted the limitations of this technique for a simple bar model and its alarming impact on the accuracy. Thus, one should exercise caution when using this technique. The second strategy, on the contrary, always preserves the eigenvalue ordering. The result, illustrated in \Cref{fig: 3D_Elasticity_plate_FEM_local_defl_st2_eig_element_r12} for $r=12$, consists in shaving off the upper part of the spectrum.

\begin{figure}[H]
    \centering
    \includegraphics[width=0.8\linewidth]{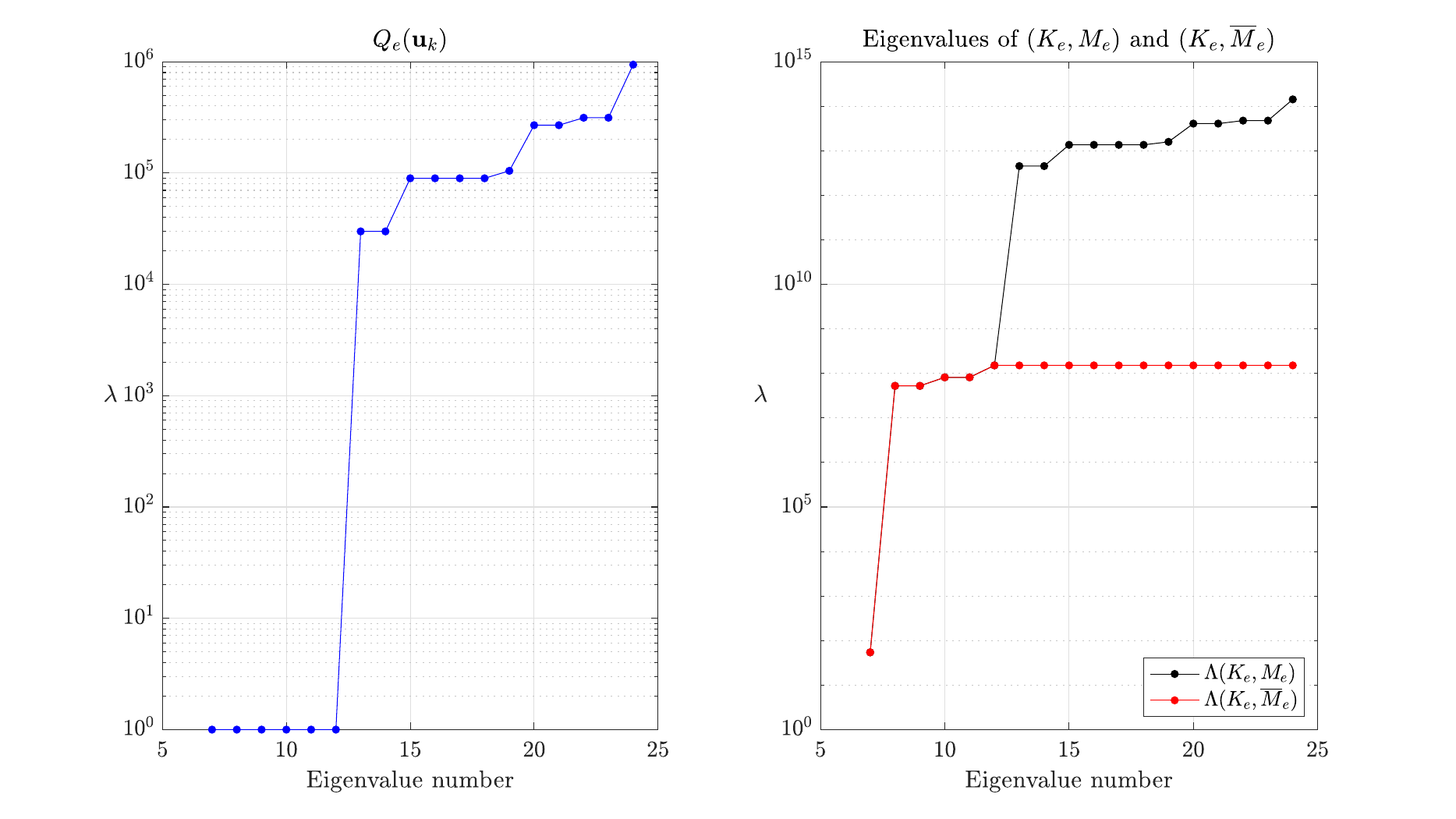}
    \caption{Values of $Q_e(\mathbf{u}_k)$ for $k=7,\dots,m$ and eigenvalues of $(K_e,M_e)$ and $(K_e,\overline{M}_e)$ for the second strategy with $r=12$}
    \label{fig: 3D_Elasticity_plate_FEM_local_defl_st2_eig_element_r12}
\end{figure}

We now investigate the global properties of the method using the same finite element mesh and material properties as in the previous section. \Cref{fig: 3D_Elasticity_plate_FEM_local_defl_st1_delta_t_crit_hex8} shows the increase of the critical time step for the first strategy for fixed rank values and increasing values of $\alpha$ together with the upper bound from \Cref{cor: local_deflation_st1}. Interestingly, \Cref{fig: 3D_Elasticity_plate_FEM_local_defl_st1_delta_t_crit_hex8} displays a threshold effect reminiscent of the local behavior. \Cref{fig: 3D_Elasticity_plate_FEM_local_defl_st2_delta_t_crit_hex8} shows the increase of the critical time step for the second strategy, which is perfectly captured by the upper bound of \Cref{cor: local_deflation_st2}.

\begin{figure}[H]
    \centering
    \begin{subfigure}[t]{0.48\textwidth}
    \centering
    \includegraphics[width=\textwidth]{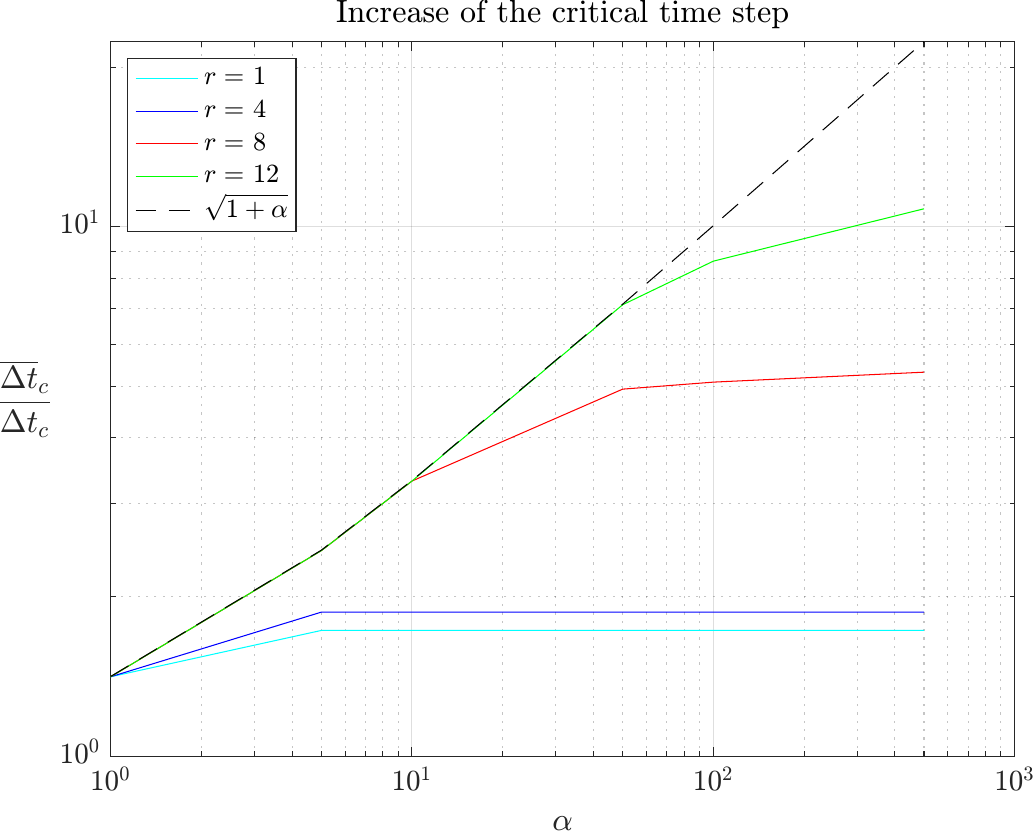}
    \caption{Strategy 1}
    \label{fig: 3D_Elasticity_plate_FEM_local_defl_st1_delta_t_crit_hex8}
    \end{subfigure}
    \hfill
    \begin{subfigure}[t]{0.48\textwidth}
    \centering
    \includegraphics[width=\textwidth]{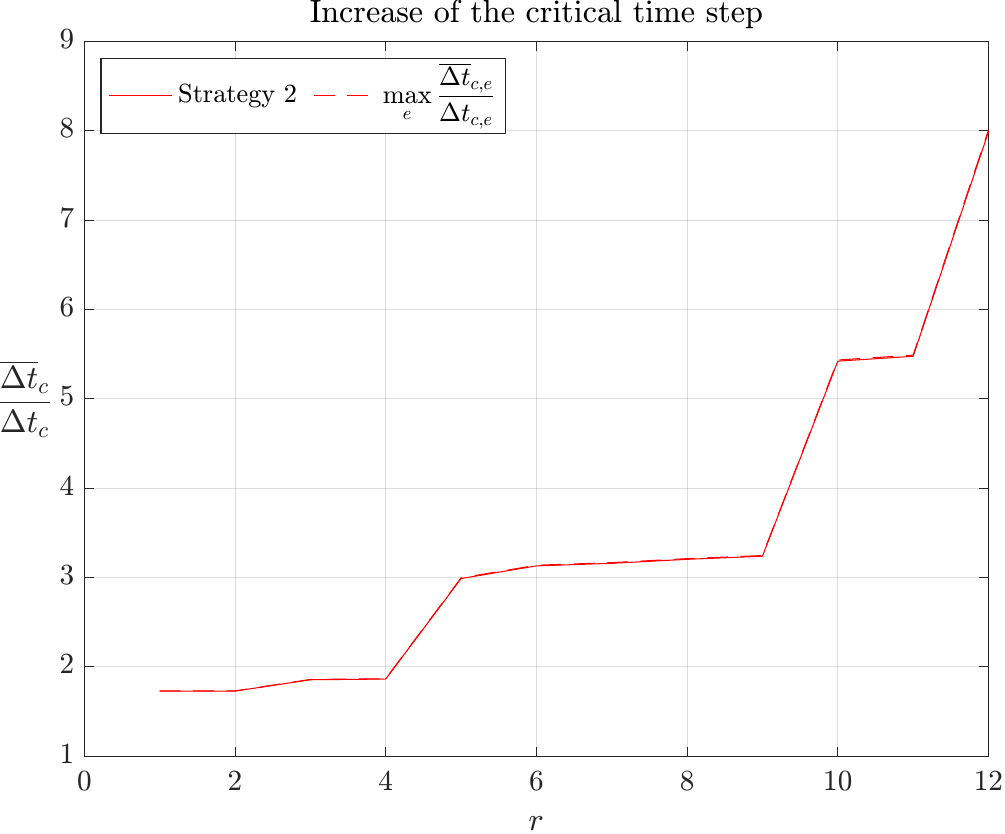}
    \caption{Strategy 2}
    \label{fig: 3D_Elasticity_plate_FEM_local_defl_st2_delta_t_crit_hex8}
    \end{subfigure}
    \hfill
    \caption{Increase of the critical time step for local deflation strategies}
    \label{fig: 3D_Elasticity_plate_FEM_local_defl_delta_t_crit_hex8}
\end{figure}

The eigenfrequency ratio for the entire spectrum is shown in \Cref{fig: 3D_Elasticity_plate_FEM_local_defl_eig_ratio_hex8} for both strategies and selected values of scaling parameter $\alpha$ and rank $r$. Contrary to the method of Olovsson et al., the lower bound is sharp only for the largest eigenfrequencies, which is the desired behavior.

\begin{figure}[H]
    \centering
    \begin{subfigure}[t]{0.48\textwidth}
    \centering
    \includegraphics[width=\textwidth]{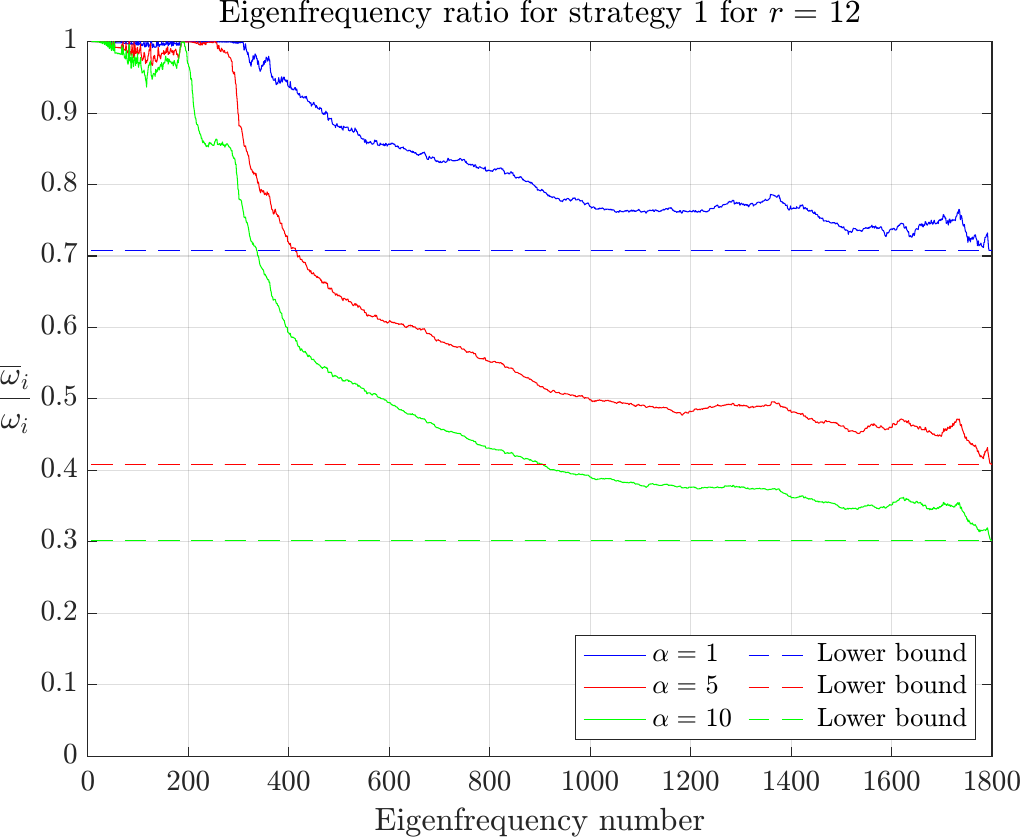}
    \caption{Strategy 1}
    \label{fig: 3D_Elasticity_plate_FEM_local_defl_st1_eig_ratio_hex8_r12}
    \end{subfigure}
    \hfill
    \begin{subfigure}[t]{0.48\textwidth}
    \centering
    \includegraphics[width=\textwidth]{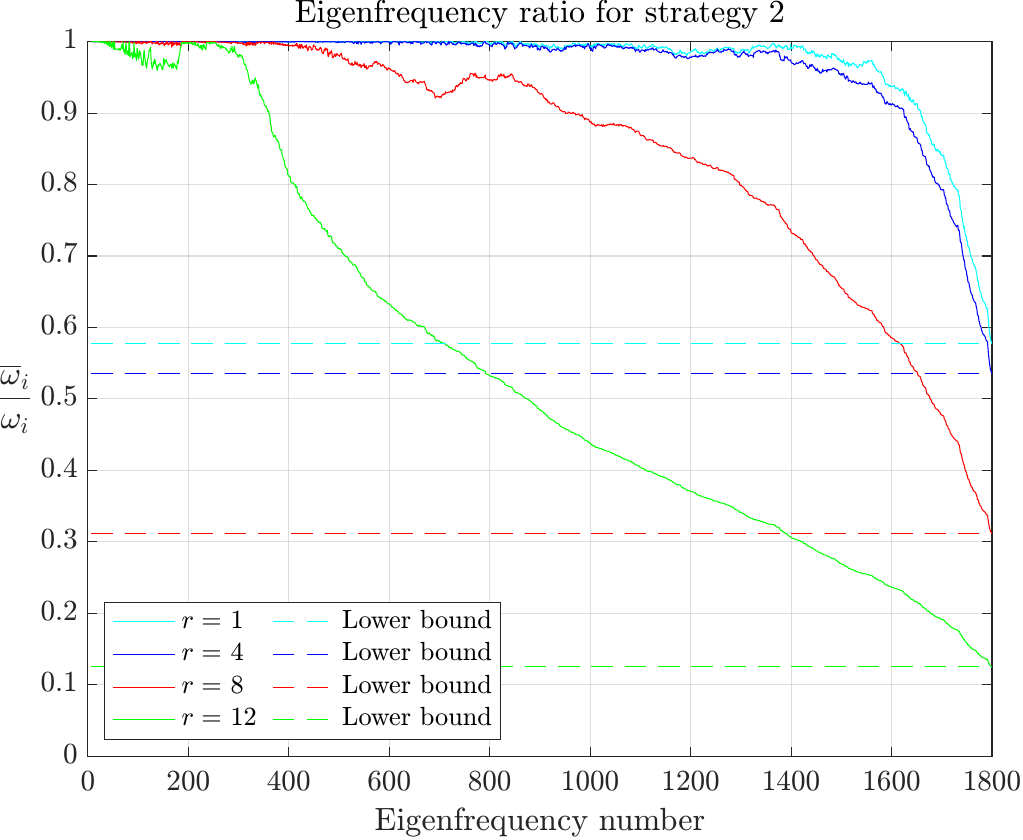}
    \caption{Strategy 2}
    \label{fig: 3D_Elasticity_plate_FEM_local_defl_st2_eig_ratio_hex8}
    \end{subfigure}
    \hfill
    \caption{Eigenfrequency ratio}
    \label{fig: 3D_Elasticity_plate_FEM_local_defl_eig_ratio_hex8}
\end{figure}

Admittedly, the first strategy cannot significant increase the critical time step without undermining the accuracy of the lower frequencies. However, the second strategy for deflation ranks $r=1,\dots,8$ seems quite promising as the eigenfrequency ratio abruptly decays only towards the end of the spectrum, ensuring both good accuracy and a significant increase of the critical time step.

Finally, we note that the proof strategy of \Cref{cor: bounds_olovsson,cor: bounds_hoffmann} also immediately yields (not necessarily tight) upper bounds on the condition number of the scaled mass matrix for these methods.

\subsection{Summary}
The ad hoc mass scaling strategies of Olovsson et al. and Hoffmann et al. may be viewed as inexact local deflation strategies based on low-frequency modes. One notable advantage of using low-frequency modes instead of high-frequency ones is that the former includes rigid-body modes and are known regardless of how distorted the element is. Thus, they offer better guarantees of robustness. Moreover, engineers have developed a very good understanding of the deformation mode shapes for an hexahedral element (see \citep[][Figure 3]{cocchetti2013selective}), based on which they constructed these ad hoc strategies. While the method of Hoffmann et al. seems significantly better than the one of Olovsson et al., it still does not exactly target the right modes, which both impedes on the accuracy and the increase of the critical time step. Preserving the eigenvalue ordering seems essential to avoid interchanging modes, which might disastrously affect the accuracy of the solution. Exact local deflation strategies may deliver far better results but assume that the stiffness matrix is explicitly available, which is rarely the case in commercial software, especially for nonlinear problems. We believe it might be possible to further improve the method of Hoffmann et al. for obtaining the desired behavior but leave this task to the engineering community.

The experiments have also revealed that the global properties are tightly connected to the local ones. The global matrix pairs $(K,M)$ and $(M,\overline{M})$ may nearly share some eigenspaces but we have not explored this hypothesis any further.

\section{Conclusion}
\label{se: conclusion}
Selective mass scaling techniques are increasingly popular in structural dynamics for increasing the critical time step of explicit time integration methods, while preserving the accuracy of structurally important lower frequencies and mode shapes. While physical intuition is sometimes sufficient for identifying and eliminating the constraint on the step size, we have substantiated it with a strong theoretical analysis thereby unraveling appealing properties of some of the most popular methods. In particular, we have drawn a unifying picture and provided a clearer positioning of the methods by bridging the gap between different communities and highlighting fundamental similarities and differences. Practically speaking though, while some methods are certainly valuable to the engineering community, others are mostly impractical and often based on elementary linear algebra facts known since decades. Moreover, nearly all methods lead to a non-diagonal mass matrix and defeat the spirit of explicit dynamics. The methods are only competitive if the saving in the number of iterations overtakes the additional cost per iteration for repeatedly solving linear systems with the scaled mass matrix. Directly modifying the inverse mass matrix \citep{olovsson2004selective,cocchetti2013selective,cocchetti2015selective,tkachuk2015direct,gonzalez2018inverse} or the stiffness matrix \cite{voet2024robust} suggests itself as a promising pathway. Finally, the benefits and drawbacks of mass scaling must be carefully weighted on a case by case basis. While mass scaling may certainly help in some cases, it might also irremediably degrade the solution, depreciating advanced material models and finite elements if not undermining the very purpose of numerical simulations. We feel this issue is largely overlooked, especially for nonlinear problems, and advise caution when using such methods.

\section*{Declaration of competing interest}
The authors declare that they have no known competing financial interests or personal relationships that could have appeared to influence the work reported in this paper.

\section*{Acknowledgments}
The second author kindly acknowledges the support of the SNSF through the project ``Smoothness in Higher Order Numerical Methods’’ n. TMPFP2\_209868 (SNSF Swiss Postdoctoral Fellowship 2021). 
\newline The third author kindly acknowledges the support of SNSF through the project ``PDE tools for analysis-aware geometry processing in simulation science’’ n. 200021\_215099.

\end{document}